\newtheorem{theorem}{Theorem}
\newtheorem{conjecture}[theorem]{Conjecture}
\newtheorem{corollary}[theorem]{Corollary}
\newtheorem{definition}[theorem]{Definition}
\newtheorem{problem}[theorem]{Problem}
\newtheorem{proposition}[theorem]{Proposition}
\newenvironment{proof}[1][Proof]{\noindent\textbf{#1.} }{\ \rule{0.5em}{0.5em}}
\begin{document}

\title{Rational Orthogonal \emph{versus} Real Orthogonal}
\author{Dragomir \v{Z}. Djokovi\'{c}\thanks{Department of Pure Mathematics, University
of Waterloo, Waterloo N2L 3G1, ON Canada; djokovic@uwaterloo.ca}
\and Simone Severini\thanks{Institute for Quantum Computing \ and Department of
Combinatorics \& Optimization, \ University of Waterloo, \ Waterloo N2L 3G1,
ON Canada; simoseve@gmail.com}
\and Ferenc Sz\"{o}ll\H{o}si\thanks{Institute of Mathematics and its Applications,
Central European University (CEU), H-1051 Budapest, N\'ador u. 9, Hungary;
szoferi@gmail.com}}
\maketitle

\begin{abstract}
The main question we raise here is the following one: given a real orthogonal
$n\times n$ matrix $X$, is it true that there exists a rational orthogonal
matrix $Y$ having the same zero-pattern? We conjecture that this is the case
and prove it for $n\leq5$. We also consider the related problem for symmetric
orthogonal matrices.

\medskip

\textbf{Keywords and phrases:} Real and rational orthogonal matrices,
zero-patterns, combinatorial orthogonality.

\end{abstract}

\section{Introduction}

Let $X=[X_{i,j}]$ be an $m\times n$ matrix over any field. The
\emph{zero-pattern }of $X$, denoted by $\underline{X}=[\underline{X}_{i,j}]$,
is the $m\times n$ $(0,1)$-matrix such that
\[
\underline{X}_{i,j}=\left\{
\begin{tabular}
[c]{ll}%
$1,$ & if $X_{i,j}\neq0;$\\
$0,$ & if $X_{i,j}=0.$%
\end{tabular}
\ \right.
\]
We shall say that $\underline{X}$ is the \emph{support} of $X$.

A square matrix $X$ is said to be \emph{unitary} if its entries are complex
and $XX^{\dagger}=I$, where $X^{\dagger}$ is the transpose conjugate of $X$
and $I$ is the identity matrix. A square matrix $X$ is said to be \emph{real
orthogonal} (or, equivalently, \emph{orthogonal}) if its entries are real and
$XX^{T}=I$, where $X^{T}$ is the transpose of $X$. A square matrix $X$ is said
to be \emph{rational orthogonal} if it is orthogonal and its entries are
rational. The sets of unitary, orthogonal, and rational orthogonal matrices of
size $n$ are denoted by $U(n)$, $O(n)$ and $O_{n}(\mathbb{Q})$, respectively.

The notion and the study of the zero-patterns of unitary matrices go back to
\cite{F88} (see also \cite{F63}) in the mathematical context, and to
\cite{lou} (see also \cite{lan}), motivated by foundational questions in
quantum mechanics. An extended list of references on this topic is contained
in \cite{sz}. For a comprehensive reference in matrix theory see, \emph{e.g.},
\cite{hj}.

When discussing properties of zero-patterns, it is natural to ask whether the
number field influences their structure. Specifically, in this paper we
formulate and support the following two conjectures.

\begin{conjecture}
\label{con1}For any $X\in U(n)$ there exists $Y\in O(n)$ such that
$\underline{X}=\underline{Y}$.
\end{conjecture}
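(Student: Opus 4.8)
The plan is to reduce the conjecture to a combinatorial question about the support pattern and then to realize that pattern by an explicit real orthogonal matrix. Since left- and right-multiplication by diagonal phase matrices preserves the support exactly, the support of a unitary matrix depends only on its double coset under the diagonal torus acting on both sides, which lets me normalize $X$ freely. Because a real orthogonal matrix is in particular unitary, every support realized by some $Y\in O(n)$ is automatically realized in $U(n)$; thus the content of the conjecture is the converse implication, and it suffices to exhibit, for each support $S$ occurring in $U(n)$, a single matrix $Y\in O(n)$ with $\underline{Y}=S$.

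First I would record the combinatorial constraints forced on $S=\underline{X}$ by unitarity. Writing the orthonormality relation $\sum_{k}X_{i,k}\overline{X_{j,k}}=\delta_{i,j}$, two distinct rows sharing a common nonzero column must share at least two such columns, since a single nonzero term cannot cancel; the same holds for columns via $X^{\dagger}X=I$. This is the standard combinatorial orthogonality condition. I would then pass to the bipartite graph whose vertex classes are the rows and the columns, with an edge for each nonzero entry, and decompose along its connected components: if, after independent row and column permutations, the support is block-diagonal, then orthonormality forces $X$ to be a direct sum of smaller unitary blocks, and it suffices to realize each block separately and reassemble an orthogonal direct sum. This reduces the conjecture to the case of an \emph{indecomposable} support.

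Next I would attack the indecomposable case by building a real orthogonal matrix with the prescribed support out of elementary pieces, using the fact that every element of $SO(n)$ is a product of plane (Givens) rotations. The idea is to choose rotation angles generically in $(0,\pi/2)$, so that no entry vanishes by accident, and to arrange the rotated planes so that the product realizes exactly the nonzero entries prescribed by $S$. For the small cases $n\le 5$ this can be carried out by inspecting the finitely many indecomposable patterns, mirroring the case analysis used for the rational orthogonal conjecture: each such pattern either factors further as a direct sum or matches an explicit parametrized family.

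The hard part will be controlling the support \emph{exactly} in the general indecomposable case. Combinatorial orthogonality is necessary but not sufficient, so a na\"ive construction may fail to annihilate an entry that $S$ requires to vanish or, conversely, be forced by the real orthogonality relations to kill an entry that $S$ requires to be nonzero. Phrased geometrically, the real algebraic variety cut out by orthogonality together with the open conditions imposed by $S$ may have no real points even when its unitary analogue is nonempty: the passage from complex (unitary) solvability to real (orthogonal) solvability is precisely the step that can break down, which is why the statement remains a conjecture in general rather than a theorem.
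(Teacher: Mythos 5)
There is a genuine gap, and you in effect concede it yourself in your last paragraph: the statement is a \emph{conjecture} in the paper, proved there only for $n\leq 5$, and your proposal does not close the distance to a proof either. Your reductions are sound and match the paper's setup --- passing to the bipartite support graph and reducing to indecomposable patterns, noting that diagonal phase multiplication preserves the support, and recording combinatorial orthogonality (quadrangularity) as a necessary condition. But the paper works with the strictly stronger \emph{strong quadrangularity} condition, and even that is not sufficient: for $n=5$ there are three indecomposable SQ patterns (cases 14, 15, 16) that support no unitary matrix at all, ruled out by a rank argument on a $3\times 2$ submatrix forced to be orthogonal to a rank-$2$ block. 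Any enumeration-based strategy must first filter out such patterns, which your sketch does not address.

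The core step of your plan --- realizing an arbitrary indecomposable unitary-realizable pattern as a product of Givens rotations with ``generic'' angles --- is exactly the step that is not an argument. Generic angles prevent accidental zeros but give you no mechanism for producing the zeros that the pattern \emph{requires}; those are codimension-one conditions that must be hit exactly, and nothing in the proposal shows the resulting real variety is nonempty whenever the unitary one is. The paper does not attempt a uniform construction: for $n\leq 5$ it enumerates the finitely many indecomposable SQ patterns (from the classification in the literature), discards the three non-unitary ones, and exhibits an explicit \emph{rational} orthogonal matrix for each survivor, found by exhaustive search and ad hoc methods (orthogonal designs, zigzag matrices, etc.). So the honest comparison is: your reductions reproduce the paper's preprocessing, but where the paper substitutes finite verification for a general argument, your proposal substitutes a hope, and the general case remains open on both sides.
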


\begin{conjecture}
\label{con2}For any $Y\in O(n)$ there exists $Z\in O_{n}(\mathbb{Q})$ such
that $\underline{Y}=\underline{Z}$.
\end{conjecture}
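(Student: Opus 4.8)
The plan is to treat the problem as one about realizable zero-patterns rather than about individual matrices, and to prove the conjecture for small $n$ by a reduction-plus-classification argument. First I would record the symmetries of the question: whether a $(0,1)$-pattern $P$ is the support of some matrix in $O(n)$, and whether it is the support of some matrix in $O_n(\mathbb{Q})$, are both invariant under left and right multiplication of $P$ by permutation matrices (relabelling rows and columns), under transposition, and, crucially, they are compatible with direct sums. Thus I would first prove that a decomposable pattern (one permutation-equivalent to a nontrivial block-diagonal $P_1 \oplus P_2$) is realizable over $\mathbb{Q}$ if and only if each block is, and that any orthogonal matrix whose support is block-triangular is automatically a direct sum; together these reduce everything to indecomposable patterns, that is, those not permutation-equivalent to a nontrivial direct sum. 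Alongside, I would record the combinatorial orthogonality constraints that $P$ must satisfy: no zero row or column, and any two distinct rows (and any two distinct columns) must have supports meeting in a number of positions $\neq 1$, since a single common nonzero position would make the corresponding inner product a nonzero single term.

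Next I would set up the framework that pins down exactly what must be shown. Given the support $P = \underline{Y}$ of the prescribed $Y$, let $V_P$ be the affine scheme over $\mathbb{Q}$ defined by the orthonormality relations together with the vanishing of every entry outside $P$. Then $Y$ is a real point of $V_P$ all of whose $|P|$ coordinates are nonzero, and such full-support points form a nonempty Euclidean-open subset $U \subseteq V_P(\mathbb{R})$. The conjecture for this $P$ is precisely the assertion $V_P(\mathbb{Q}) \cap U \neq \emptyset$. Here I would stress the central difficulty: although $O_n(\mathbb{Q})$ is dense in $O(n)$, approximating $Y$ by a rational orthogonal matrix $Z$ only yields $\underline{Z} \supseteq \underline{Y}$, because entries that are zero in $Y$ may be nonzero in a nearby $Z$. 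The imposed zeros are the hard constraints, and the whole game is to produce rational points of the orthogonal group that honour them.

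To produce such points I would use two complementary tools. The first is an inductive construction: choose the rows one at a time, each a rational unit vector supported exactly on the prescribed column set and orthogonal to the previously chosen rows. Existence of a rational unit vector in a $\mathbb{Q}$-rational affine subspace reduces, via Hasse--Minkowski, to solvability of an associated quadratic form, and when such rational points exist in dimension at least two they are dense, so the non-vanishing requirements on the coordinates can be arranged; the delicate point is to make greedy choices that can always be extended to a full orthogonal matrix. The second tool is explicit parametrization: for each indecomposable $P$ that actually arises, I would give a rational parametrization of a component of $V_P$ that respects the support (for example via Cayley-type transforms constrained to preserve the zeros, or by assembling small rotation blocks), and then specialize the parameters to rational values keeping all designated entries nonzero.

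For $n \le 5$ these tools suffice because the list of indecomposable combinatorially orthogonal patterns (up to the symmetries above) is finite and short: the constraints that distinct rows and columns meet in $\neq 1$ positions, applied with the size bound, cut the enumeration down to manageably many cases, and for each I would exhibit a concrete rational orthogonal matrix with exactly that support. The hard part, and the reason the argument does not immediately extend, is twofold: for large $n$ the classification of indecomposable patterns explodes, and, more fundamentally, there is no general theorem guaranteeing that the constrained variety $V_P$ carries rational points inside its full-support locus $U$. Controlling the prescribed zeros while retaining rationality is the essential obstacle, and it is exactly what one must finesse case by case in the small-$n$ proof.
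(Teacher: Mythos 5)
Your plan is essentially the paper's proof of the $n\le 5$ case: reduce to indecomposable zero-patterns, enumerate them up to row and column permutations, and exhibit an explicit rational orthogonal matrix for each realizable pattern (the paper does this against the classification of strongly quadrangular patterns from \cite{sz}, with the matrices found by exhaustive search and \emph{ad hoc} constructions). You correctly identify the central obstruction --- density of $O_n(\mathbb{Q})$ in $O(n)$ only yields $\underline{Z}\supseteq\underline{Y}$, not equality --- and correctly note that the statement is a conjecture whose general case remains open. The one step your plan leaves unaddressed is how to close the cases where no rational matrix turns up: there you must prove that the pattern supports no real orthogonal matrix at all, so that the claim is vacuous for it; the paper needs exactly this for three $5\times 5$ strongly quadrangular patterns (Proposition \ref{141516}, a short rank argument on a $3\times 2$ submatrix), and your coarser filter of plain combinatorial orthogonality, being weaker than strong quadrangularity, will leave you with even more candidate patterns to dispose of in this way.
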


Our main tool of analysis will be the notion of a strongly quadrangular matrix
introduced in \cite{sev1}. This extends naturally the concept of
\emph{quadrangularity} (or, equivalently, \emph{combinatorial orthogonality}
\cite{bb}). A matrix $X$ is said to be \emph{quadrangular} if every two rows
and every two columns \textquotedblleft intersect\textquotedblright\ in more
than a single entry whenever their intersection is nonempty. In other words,
the inner product of every two rows and every two columns of $\underline{X}$
is not $1$. Let $X=[X_{i,j}]$ be a complex $m\times n$ matrix. We write $X>0$
if all $X_{i,j}>0$. For $R\subseteq\{1,2,...,m\}$ and $C\subseteq
\{1,2,...,n\}$, we denote by $X_{R}^{C}$ the $|R|\times|C|$ submatrix of $X$
in the intersection of the rows and the columns indexed by $R$ and $C$, respectively.

\begin{definition}
[Strongly quadrangular matrix]We say that an $m\times n$ $\{0,1\}$-matrix
$X=[X_{i,j}]$ is \emph{row strongly quadrangular} (\emph{RSQ}) if there does
not exist $R\subseteq\{1,2,...,m\}$ with $\left\vert R\right\vert \geq2$ such
that, defining $R^{\prime}=\{k:X_{i,k}X_{j,k}=1$ for some $i\neq j$ in $R\}$,
we have $\left\vert R^{\prime}\right\vert <\left\vert R\right\vert $ and
$X_{R}^{R^{\prime}}$ has no zero-rows. We say that an $m\times n$
$\{0,1\}$-matrix $X$ is \emph{strongly quadrangular} (\emph{SQ}) if both $X$
and $X^{T}$ are RSQ.
\end{definition}

In \cite{sev1}, it was proved that if $X\in U(n)$ then $\underline{X}$ is SQ,
but the converse is not necessarily true (see also \cite{lu}). Proposition
\ref{141516} below gives the smallest possible SQ zero-patterns that do not
support unitary matrices.

So far we have been unable to exhibit counterexamples which would disprove
Conjecture \ref{con1} or Conjecture \ref{con2}. We can however get a feeling
about the problem, by explicitly working out concrete situations. For
instance, Beasley, Brualdi and Shader \cite{bb} have shown that if $X$ is a
real matrix with zero-pattern%
%TCIMACRO{\TeXButton{\begingroup}{\begingroup}}%
%BeginExpansion
\begingroup
%EndExpansion%
%TCIMACRO{\TeXButton{\scalefont{x}}{\scalefont{1}} }%
%BeginExpansion
\scalefont{1}
%EndExpansion
\[
\underline{X}=\left[
\begin{array}
[c]{ccccccccccc}%
1 & 1 & 0 & 1 & 0 & 0 & 1 & 0 & 0 & 0 & 1\\
1 & 1 & 1 & 0 & 1 & 0 & 0 & 1 & 0 & 0 & 0\\
0 & 1 & 1 & 1 & 0 & 1 & 0 & 0 & 1 & 0 & 0\\
0 & 0 & 1 & 1 & 1 & 0 & 1 & 0 & 0 & 1 & 0\\
0 & 0 & 0 & 1 & 1 & 1 & 0 & 1 & 0 & 0 & 1\\
1 & 0 & 0 & 0 & 1 & 1 & 1 & 0 & 1 & 0 & 0\\
0 & 1 & 0 & 0 & 0 & 1 & 1 & 1 & 0 & 1 & 0\\
0 & 0 & 1 & 0 & 0 & 0 & 1 & 1 & 1 & 0 & 1\\
1 & 0 & 0 & 1 & 0 & 0 & 0 & 1 & 1 & 1 & 0\\
0 & 1 & 0 & 0 & 1 & 0 & 0 & 0 & 1 & 1 & 1\\
1 & 0 & 1 & 0 & 0 & 1 & 0 & 0 & 0 & 1 & 1
\end{array}
\right]
\]%
%TCIMACRO{\TeXButton{\endgroup}{\endgroup}}%
%BeginExpansion
\endgroup
%EndExpansion
then $X\notin O(11)$. Once verified that that $\underline{X}$ is SQ, we
observe that $X$ is not a candidate for a counterexample to Conjecture
\ref{con1}, therefore corroborating the idea that the number field does not
have a strong role in determining a zero-pattern. We can proceed as follows in
four steps:

\begin{itemize}
\item By multiplying the columns $1,2,4,7,$ and $11$ by suitable phase
factors, all entries in the first row are real;

\item By multiplying the rows $2,3,5,6,7,9,10$ and $11$ by phase factors, the
entries
\[
(2,1),(3,2),(5,4),(6,1),(7,2),(9,1),(10,2)\text{ and }(11,1)
\]
are real;

\item By multiplying the columns $3,5,6,8$ and $9$ by phase factors, the
entries
\[
(2,3),(2,5),(2,8),(3,6)\text{ and }(3,9)
\]
are real;

\item Finally, by multiplying the rows $4$ and $8$, and the column $10$ by
phase factors, the entries
\[
(4,3),(8,3)\text{ and }(4,10)
\]
are real.
\end{itemize}

At this point, all the entries mentioned above are real. If $X\in U(n)$ then
the inner products of different rows of the matrix obtained with these steps
must vanish. It follows that $X$ is a real matrix, but we know that $X\notin
O(11)$ by \cite{bb}.

Here we adopt a systematic approach to our conjectures. In Section \ref{sec2},
we verify Conjecture \ref{con1} and Conjecture \ref{con2} for all
$(0,1)$-matrices of size $n\leq5$. For this purpose, we use the tables in
\cite{sz} of all SQ $(0,1)$-matrices of small size. On the way, we prove that
some of those are not zero-patterns of unitary matrices, thus refining the
classification of \cite{sz}. In Section \ref{sec3}, we construct examples of
symmetric rational orthogonal matrices with specified indecomposable
zero-pattern and specified trace. In Section \ref{sec4}, we construct some
infinite families of rational orthogonal matrices. The constructions are based
on orthogonal designs, graphs and combinatorial arguments. We conclude our
paper in Section \ref{sec5} with three intriguing open problems.

Recall that an $n\times n$ matrix $X$ that contains an $s\times(n-s)$ zero
submatrix for $0<s<n$ is said to be \emph{decomposable}. If no such submatrix
exists then $X$ is said to be \emph{indecomposable}.

\section{Rational orthogonal matrices of small size\label{sec2}}

We shall consider the indecomposable SQ zero-patterns of size $n\leq5$. Two
$(0,1)$-matrices $X$ and $Y$ are said to be \emph{equivalent }if there are
permutation matrices $P$ and $Q$ such that $PXQ=Y$. A list of representatives
for equivalence classes of indecomposable SQ zero-patterns of size $n\leq5$
was drawn in \cite{sz}. We construct rational orthogonal matrices for each
specific zero-pattern. This is not possible for the cases $14,15$ and $16$,
because it turns out that those do not support unitary matrices. Here is a
formal statement of such a fact:

\begin{proposition}
\label{141516}There is no matrix $X\in U(5)$ such that $\underline{X}$ is one
of the following zero-patterns:%
\[%
\begin{tabular}
[c]{lll}%
$\left[
\begin{array}
[c]{ccccc}%
0 & 0 & 1 & 1 & 1\\
0 & 0 & 1 & 1 & 1\\
1 & 1 & 0 & 1 & 1\\
1 & 1 & 1 & 1 & 1\\
1 & 1 & 1 & 1 & 1
\end{array}
\right]  _{14},$ & $\left[
\begin{array}
[c]{ccccc}%
0 & 0 & 1 & 1 & 1\\
0 & 0 & 1 & 1 & 1\\
1 & 1 & 0 & 1 & 1\\
1 & 1 & 1 & 0 & 1\\
1 & 1 & 1 & 1 & 1
\end{array}
\right]  _{15},$ & $\left[
\begin{array}
[c]{ccccc}%
0 & 0 & 1 & 1 & 1\\
0 & 0 & 1 & 1 & 1\\
1 & 1 & 0 & 1 & 1\\
1 & 1 & 1 & 0 & 1\\
1 & 1 & 1 & 1 & 0
\end{array}
\right]  _{16}$%
\end{tabular}
.
\]

\end{proposition}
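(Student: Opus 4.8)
The plan is to exploit the $2\times2$ zero block in the top-left corner that is common to all three patterns: in each case rows $1$ and $2$ are supported only on columns $3,4,5$. Suppose, for contradiction, that $X=[x_{ij}]\in U(5)$ realizes one of these patterns. Since the rows of $X$ are orthonormal with respect to the Hermitian inner product, I would first restrict attention to the three coordinates indexed by columns $3,4,5$. Writing $r_i'=(x_{i3},x_{i4},x_{i5})$ for the restriction of row $i$, the vectors $r_1'$ and $r_2'$ are orthonormal in $\mathbb{C}^3$, hence linearly independent, so they span a two-dimensional subspace whose Hermitian-orthogonal complement is a line $\mathbb{C}u$ for some unit vector $u$.

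The key step is to show that every $r_i'$ with $i\in\{3,4,5\}$ is a scalar multiple of $u$. This follows from unitarity: for $i\neq j$ one has $\sum_{k=1}^5 x_{ik}\overline{x_{jk}}=0$, and taking $j\in\{1,2\}$ the terms with $k\in\{1,2\}$ vanish because $x_{1k}=x_{2k}=0$ there, leaving $\langle r_i',r_j'\rangle=0$. Thus each of $r_3',r_4',r_5'$ is orthogonal to both $r_1'$ and $r_2'$ and so lies in $\mathbb{C}u$; write $r_i'=c_i u$.

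To finish, I would read off the contradiction from rows $3$ and $4$, which have the same support in all three patterns. Row $3$ is $(1,1,0,1,1)$, so $x_{33}=0$ while $x_{34},x_{35}\neq0$; hence $r_3'=(0,x_{34},x_{35})$ is nonzero, forcing $c_3\neq0$ and therefore $u$ to be a nonzero multiple of $(0,x_{34},x_{35})$. In particular the column-$3$ coordinate of $u$ is zero. But row $4$ has $x_{43}\neq0$ in all three patterns, whereas $r_4'=c_4 u$ has column-$3$ coordinate $c_4 u_3=0$, a contradiction. Since rows $3$ and $4$ carry this contradiction in every case, the single argument disposes of patterns $14$, $15$ and $16$ simultaneously.

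The argument is short, so there is no serious obstacle; the only points requiring care are verifying that the restricted vectors $r_1',r_2'$ genuinely span a plane (guaranteed by orthonormality) so that the orthogonal complement is exactly one-dimensional, and checking the two degenerate scalar cases $c_3=0$ and $c_4=0$, both of which are immediately ruled out by the nonzero entries dictated by the support.
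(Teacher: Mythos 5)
Your proof is correct and is essentially the transposed version of the paper's argument: the paper restricts columns $1,2$ (supported on rows $3,4,5$) and shows they are forced into the one-dimensional orthogonal complement of the rank-$2$ submatrix $X_{\{3,4,5\}}^{\{3,4\}}$, hence linearly dependent, while you restrict rows $3,4,5$ to columns $3,4,5$ and force them into the line orthogonal to the plane spanned by rows $1,2$, reading the contradiction off the supports of rows $3$ and $4$ in column $3$. Both proofs hinge on the same two features of the patterns (the $2\times2$ zero block and the entries $(3,3)=0$, $(4,3)=1$), so this is the same idea carried out on rows instead of columns.
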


\begin{proof}
Suppose that such $X$ exists. Let $Y=X_{\{3,4,5\}}^{\{1,2\}}$ and
$Z=X_{\{3,4,5\}}^{\{3,4\}}$. By inspecting the above three zero-patterns, we
conclude that $Z$ has rank $2$. Since $X$ is unitary, we have $Y^{\dagger}Z=0$
and so the two columns of $Y$ must be linearly dependent. Consequently, the
first two columns of $X$ are linearly dependent, which is a contradiction.
\end{proof}

\bigskip

The main result of the paper is essentially the following theorem:

\begin{theorem}
Conjecture \ref{con1} and Conjecture \ref{con2} are true for $n\leq5$.
\end{theorem}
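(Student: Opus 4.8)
The plan is to reduce both conjectures to a finite verification over the classified list of indecomposable SQ zero-patterns of size $n\leq5$, and then to dispatch each surviving pattern by an explicit rational construction. First I would observe that it suffices to treat indecomposable patterns: if $\underline{X}$ is decomposable, then after permuting rows and columns it is block-diagonal, and an orthogonal (respectively rational orthogonal) matrix realizing the whole pattern can be assembled blockwise from realizations of the indecomposable diagonal blocks, each of which has size strictly less than $n$. Likewise, any $(0,1)$-pattern that is not SQ cannot be the support of a unitary matrix (hence not of an orthogonal one) by the result of \cite{sev1} quoted above, so such patterns are vacuously irrelevant to both conjectures. This leaves exactly the finite list of representatives of indecomposable SQ zero-patterns of size $n\leq5$ tabulated in \cite{sz}.

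Next I would handle the interplay between the two conjectures so that a single construction settles both at once. For Conjecture~\ref{con1} I must show that every pattern that supports some $X\in U(n)$ also supports some $Y\in O(n)$; for Conjecture~\ref{con2} I must show that every pattern supporting some $Y\in O(n)$ also supports some $Z\in O_n(\mathbb{Q})$. The efficient route is to exhibit, for each surviving pattern $\underline{X}$, an \emph{explicit rational orthogonal} matrix $Z\in O_n(\mathbb{Q})$ with $\underline{Z}=\underline{X}$. Since $\mathbb{Q}\subset\mathbb{R}$, such a $Z$ is simultaneously a witness for both conjectures: it is the required $Y\in O(n)$ in Conjecture~\ref{con1} and the required $Z\in O_n(\mathbb{Q})$ in Conjecture~\ref{con2}. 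The only patterns on the list for which no such rational witness can exist are the three patterns labelled $14,15,16$ of Proposition~\ref{141516}, which do not support even a unitary matrix; these must be excluded, and Proposition~\ref{141516} shows they carry no obligation under either conjecture, so their removal is justified rather than a gap.

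The concrete work, then, is to march through the remaining representatives from the table in \cite{sz} and produce a rational orthogonal matrix matching each one. For the small sizes $n\leq4$ the patterns are few and the constructions are routine, typically built from rational orthogonal $2\times2$ blocks such as $\tfrac{1}{5}\left[\begin{smallmatrix}3 & 4\\ -4 & 3\end{smallmatrix}\right]$ and their combinations. For $n=5$ I would proceed pattern by pattern, using rational Pythagorean-type rows (rescalings of integer vectors of equal norm with pairwise-orthogonal supports) to fill in each prescribed support, and then verify orthogonality and the exact zero-pattern by direct computation. Throughout I must be careful that the constructed entries are \emph{nonzero} exactly where the pattern demands a $1$ and zero elsewhere, since a generic rational choice could accidentally produce an extra zero.

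The main obstacle I anticipate is not any single hard theorem but the bookkeeping: ensuring the case list is genuinely complete and that each individual rational matrix has \emph{precisely} the intended support with no accidental vanishing entries. In particular, the delicate step is matching the denser $n=5$ patterns, where the orthogonality constraints couple many entries at once and a naive rational assignment may fail to be orthogonal or may collapse a required nonzero entry to $0$; resolving these will require choosing the free parameters in each construction judiciously, possibly after introducing one or two rational degrees of freedom and solving the resulting polynomial conditions over $\mathbb{Q}$.
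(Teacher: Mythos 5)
Your proposal is correct and follows essentially the same route as the paper: reduce to the indecomposable SQ zero-patterns classified in \cite{sz}, discard the three patterns of Proposition \ref{141516} as supporting no unitary matrix, and settle both conjectures simultaneously by exhibiting an explicit matrix in $O_{n}(\mathbb{Q})$ for each remaining pattern. The paper likewise leans on exhaustive search plus \emph{ad hoc} constructions for the harder $n=5$ cases, so the only thing separating your outline from the paper's argument is the actual list of matrices.
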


\begin{proof}
Clearly it suffices to consider only the indecomposable SQ zero-patterns. For
$n\leq5$, these zero-patterns have been enumerated in \cite{sz} (up to
equivalence). All of these zero-patterns support unitary matrices, except the
three cases for $n=5$ mentioned in Proposition \ref{141516}. Thus, in order to
prove the theorem it suffices to construct a matrix in $O_{n}(\mathbb{Q})$ for
each of the remaining zero-patterns. This is done in the list below. Many of
these matrices have been constructed by using an exhaustive search but in some
cases this was not possible and we have resorted to \emph{ad hoc} methods.
Some of these methods are sketched in Section \ref{sec4}.
\end{proof}

\bigskip

In our list, a matrix $X$ will be written in the form%
\[
X=\frac{1}{d}\left[
\begin{array}
[c]{ccc}%
\ast & \cdots & \ast\\
\vdots & \ddots & \vdots\\
\ast & \cdots & \ast
\end{array}
\right]  _{k},
\]
where $k$ is simply a numerical label for the equivalence classes of
zero-patterns, identical to the labels in \cite{sz}. We say that the
denominator $d$ is \emph{minimal} if it is the smallest possible denominator
among all rational orthogonal matrices with the same zero-pattern. All
denominators in the list are minimal except for a few cases, when $n=5$.
Exceptions are the cases 6, 7, 19, 28, and 31.

\subsection{$n=2$}%

%TCIMACRO{\TeXButton{\begingroup}{\begingroup}}%
%BeginExpansion
\begingroup
%EndExpansion%
%TCIMACRO{\TeXButton{\scalefont{x}}{\scalefont{1}}}%
%BeginExpansion
\scalefont{1}%
%EndExpansion%
\[%
\begin{tabular}
[c]{l}%
$\frac{1}{5}\left[
\begin{array}
[c]{rr}%
3 & 4\\
4 & -3
\end{array}
\right]  _{1}$%
\end{tabular}
.
\]
%

%TCIMACRO{\TeXButton{\endgroup}{\endgroup}}%
%BeginExpansion
\endgroup
%EndExpansion

\subsection{$n=3$}%

%TCIMACRO{\TeXButton{\begingroup}{\begingroup}}%
%BeginExpansion
\begingroup
%EndExpansion%
%TCIMACRO{\TeXButton{\scalefont{x}}{\scalefont{1}}}%
%BeginExpansion
\scalefont{1}%
%EndExpansion%
\[%
\begin{tabular}
[c]{lll}%
$\frac{1}{25}\left[
\begin{array}
[c]{rrr}%
16 & 12 & 15\\
12 & 9 & -20\\
15 & -20 & \mathbf{0}%
\end{array}
\right]  _{1}$ &  & $\frac{1}{3}\left[
\begin{array}
[c]{rrr}%
2 & -1 & 2\\
-1 & 2 & 2\\
2 & 2 & -1
\end{array}
\right]  _{2}$%
\end{tabular}
.
\]
%

%TCIMACRO{\TeXButton{\endgroup}{\endgroup}}%
%BeginExpansion
\endgroup
%EndExpansion

\subsection{$n=4$}%

%TCIMACRO{\TeXButton{\begingroup}{\begingroup}}%
%BeginExpansion
\begingroup
%EndExpansion%
%TCIMACRO{\TeXButton{\scalefont{x}}{\scalefont{1}}}%
%BeginExpansion
\scalefont{1}%
%EndExpansion%
\[%
\begin{tabular}
[c]{ccc}%
$\frac{1}{9}\left[
\begin{array}
[c]{rrrr}%
8 & -3 & 2 & 2\\
-3 & \mathbf{0} & 6 & 6\\
2 & 6 & -4 & 5\\
2 & 6 & 5 & -4
\end{array}
\right]  _{1}$ &  & $\frac{1}{9}\left[
\begin{array}
[c]{rrrr}%
6 & \mathbf{0} & 3 & -6\\
\mathbf{0} & 1 & 8 & 4\\
3 & 8 & -2 & 2\\
-6 & 4 & 2 & -5
\end{array}
\right]  _{2}$%
\end{tabular}
\
\]%
\[%
\begin{tabular}
[c]{ccc}%
$\frac{1}{33}\left[
\begin{array}
[c]{rrrr}%
16 & 7 & \mathbf{0} & -28\\
7 & \mathbf{0} & 32 & 4\\
\mathbf{0} & 32 & -1 & 8\\
-28 & 4 & 8 & -15
\end{array}
\right]  _{3}$ &  & $\frac{1}{3}\left[
\begin{array}
[c]{rrrr}%
1 & 2 & -2 & \mathbf{0}\\
2 & \mathbf{0} & 1 & 2\\
-2 & 1 & \mathbf{0} & 2\\
\mathbf{0} & 2 & 2 & -1
\end{array}
\right]  _{4}$%
\end{tabular}
\
\]%
\[%
\begin{tabular}
[c]{lll}%
$\frac{1}{65}\left[
\begin{array}
[c]{rrrr}%
25 & \mathbf{0} & -36 & 48\\
\mathbf{0} & \mathbf{0} & 52 & 39\\
-36 & 52 & -9 & 12\\
48 & 39 & 12 & -16
\end{array}
\right]  _{5}$ &  & $\frac{1}{15}\left[
\begin{array}
[c]{rrrr}%
\mathbf{0} & \mathbf{0} & 9 & 12\\
2 & 14 & -4 & 3\\
10 & -5 & -8 & 6\\
11 & 2 & 8 & -6
\end{array}
\right]  _{6}$%
\end{tabular}
\
\]%
\[%
\begin{tabular}
[c]{lll}%
$\frac{1}{25}\left[
\begin{array}
[c]{rrrr}%
15 & \mathbf{0} & 12 & 16\\
\mathbf{0} & 20 & 12 & -9\\
-20 & \mathbf{0} & 9 & 12\\
\mathbf{0} & 15 & -16 & 12
\end{array}
\right]  _{7}$ &  & $\frac{1}{2}\left[
\begin{array}
[c]{rrrr}%
1 & 1 & 1 & -1\\
1 & 1 & -1 & 1\\
1 & -1 & 1 & 1\\
-1 & 1 & 1 & 1
\end{array}
\right]  _{8}$%
\end{tabular}
\ .
\]
%

%TCIMACRO{\TeXButton{\endgroup}{\endgroup}}%
%BeginExpansion
\endgroup
%EndExpansion

\subsection{$n=5$}%

%TCIMACRO{\TeXButton{\begingroup}{\begingroup}}%
%BeginExpansion
\begingroup
%EndExpansion%
%TCIMACRO{\TeXButton{\scalefont{x}}{\scalefont{1}}}%
%BeginExpansion
\scalefont{1}%
%EndExpansion%
\[%
\begin{tabular}
[c]{lll}%
$\frac{1}{4}\left[
\begin{array}
[c]{rrrrr}%
3 & 1 & 1 & 1 & -2\\
1 & 3 & -1 & -1 & 2\\
1 & -1 & 3 & -1 & 2\\
1 & -1 & -1 & 3 & 2\\
-2 & 2 & 2 & 2 & \mathbf{0}%
\end{array}
\right]  _{1}$ &  & $\frac{1}{7}\left[
\begin{array}
[c]{rrrrr}%
4 & -3 & 2 & 4 & 2\\
-3 & 4 & 2 & 4 & 2\\
2 & 2 & 3 & -4 & 4\\
4 & 4 & -4 & 1 & \mathbf{0}\\
2 & 2 & 4 & \mathbf{0} & -5
\end{array}
\right]  _{2}$%
\end{tabular}
\
\]%
\[%
\begin{tabular}
[c]{lll}%
$\frac{1}{11}\left[
\begin{array}
[c]{rrrrr}%
8 & 4 & 1 & 2 & -6\\
4 & 5 & -4 & \mathbf{0} & 8\\
1 & -4 & \mathbf{0} & 10 & 2\\
2 & \mathbf{0} & 10 & -1 & 4\\
-6 & 8 & 2 & 4 & -1
\end{array}
\right]  _{3}$ &  & $\frac{1}{5}\left[
\begin{array}
[c]{rrrrr}%
3 & 2 & -2 & 2 & 2\\
2 & 2 & 1 & -4 & \mathbf{0}\\
-2 & 1 & 2 & \mathbf{0} & 4\\
2 & -4 & \mathbf{0} & -1 & 2\\
2 & \mathbf{0} & 4 & 2 & -1
\end{array}
\right]  _{4}$%
\end{tabular}
\
\]%
\[%
\begin{tabular}
[c]{lll}%
$\frac{1}{147}\left[
\begin{array}
[c]{rrrrr}%
145 & 8 & \mathbf{0} & 14 & -18\\
8 & 51 & 80 & -112 & \mathbf{0}\\
\mathbf{0} & 80 & 47 & 70 & 90\\
14 & -112 & 70 & \mathbf{0} & 63\\
-18 & \mathbf{0} & 90 & 63 & -96
\end{array}
\right]  _{5}$ &  & $\frac{1}{625}\left[
\begin{array}
[c]{rrrrr}%
256 & 240 & 192 & 375 & 300\\
240 & 225 & 180 & \mathbf{0} & -500\\
192 & 180 & 144 & -500 & 225\\
375 & \mathbf{0} & -500 & \mathbf{0} & \mathbf{0}\\
300 & -500 & 225 & \mathbf{0} & \mathbf{0}%
\end{array}
\right]  _{6}$%
\end{tabular}
\
\]%
\[%
\begin{tabular}
[c]{lll}%
$\frac{1}{75}\left[
\begin{array}
[c]{rrrrr}%
50 & -25 & \mathbf{0} & 30 & 40\\
-25 & 50 & \mathbf{0} & 30 & 40\\
\mathbf{0} & \mathbf{0} & \mathbf{0} & 60 & -45\\
30 & 30 & 60 & -9 & -12\\
40 & 40 & -45 & -12 & -16
\end{array}
\right]  _{7}$ &  & $\frac{1}{25}\left[
\begin{array}
[c]{rrrrr}%
16 & 12 & \mathbf{0} & 12 & -9\\
12 & 9 & \mathbf{0} & -16 & 12\\
\mathbf{0} & \mathbf{0} & \mathbf{0} & 15 & 20\\
12 & -16 & 15 & \mathbf{0} & \mathbf{0}\\
-9 & 12 & 20 & \mathbf{0} & \mathbf{0}%
\end{array}
\right]  _{8}$%
\end{tabular}
\
\]%
\[%
\begin{tabular}
[c]{lll}%
$\frac{1}{9}\left[
\begin{array}
[c]{rrrrr}%
8 & 2 & 2 & \mathbf{0} & -3\\
2 & 4 & -6 & 3 & 4\\
2 & -6 & 1 & 6 & 2\\
\mathbf{0} & 3 & 6 & \mathbf{0} & 6\\
-3 & 4 & 2 & 6 & -4
\end{array}
\right]  _{9}$ &  & $\frac{1}{9}\left[
\begin{array}
[c]{rrrrr}%
\mathbf{0} & \mathbf{0} & 6 & 3 & 6\\
\mathbf{0} & 4 & 5 & 2 & -6\\
6 & -5 & \mathbf{0} & 4 & -2\\
3 & 6 & -4 & 4 & 2\\
6 & 2 & 2 & -6 & 1
\end{array}
\right]  _{10}$%
\end{tabular}
\
\]%
\[%
\begin{tabular}
[c]{lll}%
$\frac{1}{27}\left[
\begin{array}
[c]{rrrrr}%
20 & -12 & 10 & 9 & 2\\
-12 & 6 & 15 & 18 & \mathbf{0}\\
10 & 15 & 2 & \mathbf{0} & -20\\
9 & 18 & \mathbf{0} & \mathbf{0} & 19\\
2 & \mathbf{0} & -20 & 18 & -21
\end{array}
\right]  _{11}$ &  & $\frac{1}{21}\left[
\begin{array}
[c]{rrrrr}%
\mathbf{0} & \mathbf{0} & 4 & 5 & 20\\
\mathbf{0} & 7 & 10 & 16 & -6\\
18 & -10 & \mathbf{0} & 4 & -1\\
9 & 16 & -10 & \mathbf{0} & 2\\
6 & 6 & 15 & -12 & \mathbf{0}%
\end{array}
\right]  _{12}$%
\end{tabular}
\
\]%
\[%
\begin{tabular}
[c]{lll}%
$\frac{1}{9}\left[
\begin{array}
[c]{rrrrr}%
4 & 4 & 2 & 6 & 3\\
4 & 4 & 2 & -3 & -6\\
2 & 2 & 1 & -6 & 6\\
6 & -3 & -6 & \mathbf{0} & \mathbf{0}\\
3 & -6 & 6 & \mathbf{0} & \mathbf{0}%
\end{array}
\right]  _{13}$ &  & $\frac{1}{9}\left[
\begin{array}
[c]{rrrrr}%
8 & \mathbf{0} & -3 & 2 & 2\\
\mathbf{0} & 7 & \mathbf{0} & 4 & -4\\
-3 & \mathbf{0} & \mathbf{0} & 6 & 6\\
2 & 4 & 6 & -3 & 4\\
2 & -4 & 6 & 4 & -3
\end{array}
\right]  _{17}$%
\end{tabular}
\
\]%
\[%
\begin{tabular}
[c]{lll}%
$\frac{1}{9}\left[
\begin{array}
[c]{rrrrr}%
5 & 4 & \mathbf{0} & -6 & 2\\
4 & 3 & -4 & 6 & 2\\
\mathbf{0} & -4 & 1 & \mathbf{0} & 8\\
-6 & 6 & \mathbf{0} & \mathbf{0} & 3\\
2 & 2 & 8 & 3 & \mathbf{0}%
\end{array}
\right]  _{18}$ &  & $\frac{1}{441}\left[
\begin{array}
[c]{rrrrr}%
400 & \mathbf{0} & 100 & 105 & 116\\
\mathbf{0} & 400 & 80 & 84 & -145\\
100 & 80 & 41 & -420 & \mathbf{0}\\
105 & 84 & -420 & \mathbf{0} & \mathbf{0}\\
116 & -145 & \mathbf{0} & \mathbf{0} & -400
\end{array}
\right]  _{19}$%
\end{tabular}
\
\]%
\[%
\begin{tabular}
[c]{lll}%
$\frac{1}{6}\left[
\begin{array}
[c]{rrrrr}%
3 & 3 & 3 & -3 & \mathbf{0}\\
3 & 3 & -3 & 3 & \mathbf{0}\\
3 & -3 & 1 & 1 & 4\\
-3 & 3 & 1 & 1 & 4\\
\mathbf{0} & \mathbf{0} & 4 & 4 & -2
\end{array}
\right]  _{20}^{1}$ &  & $\frac{1}{21}\left[
\begin{array}
[c]{rrrrr}%
18 & \mathbf{0} & 6 & \mathbf{0} & -9\\
\mathbf{0} & 17 & 6 & -10 & 4\\
6 & 6 & \mathbf{0} & 15 & 12\\
\mathbf{0} & -10 & 15 & -4 & 10\\
-9 & 4 & 12 & 10 & -10
\end{array}
\right]  _{21}^{1}$%
\end{tabular}
\
\]%
\[%
\begin{tabular}
[c]{lll}%
$\frac{1}{15}\left[
\begin{array}
[c]{rrrrr}%
10 & \mathbf{0} & 3 & 4 & 10\\
\mathbf{0} & 10 & 6 & 8 & -5\\
3 & 6 & 6 & -12 & \mathbf{0}\\
4 & 8 & -12 & -1 & \mathbf{0}\\
10 & -5 & \mathbf{0} & \mathbf{0} & -10
\end{array}
\right]  _{22}$ &  & $\frac{1}{5}\left[
\begin{array}
[c]{rrrrr}%
3 & 2 & 2 & -2 & -2\\
2 & 3 & -2 & 2 & 2\\
2 & -2 & 3 & -2 & 3\\
-2 & 2 & 2 & -2 & 3\\
-2 & 2 & 2 & 3 & -2
\end{array}
\right]  _{23}$%
\end{tabular}
\
\]%
\[%
\begin{tabular}
[c]{ccc}%
$\frac{1}{11}\left[
\begin{array}
[c]{rrrrr}%
\mathbf{0} & \mathbf{0} & 2 & 6 & 9\\
1 & -2 & -6 & 8 & -4\\
2 & 7 & 6 & 4 & -4\\
4 & -8 & 6 & 1 & -2\\
10 & 2 & -3 & -2 & 2
\end{array}
\right]  _{24}$ &  & $\frac{1}{10}\left[
\begin{array}
[c]{rrrrr}%
\mathbf{0} & \mathbf{0} & \mathbf{0} & 6 & 8\\
1 & -7 & -5 & -4 & 3\\
1 & -7 & 5 & 4 & -3\\
7 & 1 & -5 & 4 & -3\\
7 & 1 & 5 & -4 & 3
\end{array}
\right]  _{25}$%
\end{tabular}
\
\]
$\ \ $%
\[%
\begin{tabular}
[c]{ccc}%
$\frac{1}{45}\left[
\begin{array}
[c]{rrrrr}%
\mathbf{0} & \mathbf{0} & \mathbf{0} & 27 & 36\\
\mathbf{0} & 42 & 6 & 12 & -9\\
5 & -16 & 12 & 32 & -24\\
20 & 2 & -39 & 8 & -6\\
40 & 1 & 18 & -8 & 6
\end{array}
\right]  _{26}$ &  & $\frac{1}{45}\left[
\begin{array}
[c]{rrrrr}%
\mathbf{0} & \mathbf{0} & \mathbf{0} & 27 & -36\\
\mathbf{0} & 35 & 20 & 16 & 12\\
15 & \mathbf{0} & -30 & 24 & 18\\
30 & -20 & 25 & 8 & 6\\
30 & 20 & -10 & -20 & -15
\end{array}
\right]  _{27}$%
\end{tabular}
\
\]
$\ \ $%
\[%
\begin{tabular}
[c]{lll}%
$\frac{1}{165}\left[
\begin{array}
[c]{rrrrr}%
\mathbf{0} & \mathbf{0} & \mathbf{0} & 132 & -99\\
\mathbf{0} & 80 & 35 & 84 & 112\\
5 & \mathbf{0} & 160 & -24 & -32\\
160 & -35 & \mathbf{0} & 12 & 16\\
-40 & -140 & 20 & 45 & 60
\end{array}
\right]  _{28}$ &  & $\frac{1}{39}\left[
\begin{array}
[c]{rrrrr}%
\mathbf{0} & \mathbf{0} & \mathbf{0} & 15 & 36\\
28 & 5 & -6 & 24 & -10\\
2 & 18 & -32 & -12 & 5\\
27 & -4 & 10 & -24 & 10\\
-2 & 34 & 19 & \mathbf{0} & \mathbf{0}%
\end{array}
\right]  _{29}$%
\end{tabular}
\
\]
$\ \ $%
\[%
\begin{tabular}
[c]{lll}%
$\frac{1}{15}\left[
\begin{array}
[c]{rrrrr}%
\mathbf{0} & \mathbf{0} & \mathbf{0} & 9 & 12\\
\mathbf{0} & 14 & 2 & 4 & -3\\
10 & -4 & 3 & 8 & -6\\
10 & 3 & 4 & -8 & 6\\
5 & 2 & -14 & \mathbf{0} & \mathbf{0}%
\end{array}
\right]  _{30}$ &  & $\frac{1}{165}\left[
\begin{array}
[c]{rrrrr}%
\mathbf{0} & \mathbf{0} & \mathbf{0} & 99 & 132\\
\mathbf{0} & 35 & 140 & 64 & -48\\
160 & \mathbf{0} & -20 & 28 & -21\\
40 & 20 & 75 & -112 & 84\\
5 & -160 & 40 & \mathbf{0} & \mathbf{0}%
\end{array}
\right]  _{31}$%
\end{tabular}
\
\]
$\ \ $%
\[%
\begin{tabular}
[c]{lll}%
$\frac{1}{15}\left[
\begin{array}
[c]{rrrrr}%
\mathbf{0} & \mathbf{0} & \mathbf{0} & 9 & 12\\
\mathbf{0} & 10 & 10 & 4 & -3\\
10 & \mathbf{0} & -5 & 8 & -6\\
10 & 5 & \mathbf{0} & -8 & 6\\
5 & -10 & 10 & \mathbf{0} & \mathbf{0}%
\end{array}
\right]  _{32}$ &  & $\frac{1}{21}\left[
\begin{array}
[c]{rrrrr}%
\mathbf{0} & \mathbf{0} & 7 & 14 & 14\\
\mathbf{0} & 7 & 0 & 14 & -14\\
-6 & 18 & 6 & -6 & 3\\
9 & 8 & -16 & 2 & 6\\
18 & 2 & 10 & -3 & -2
\end{array}
\right]  _{33}$%
\end{tabular}
\
\]
$\ \ $%
\[%
\begin{tabular}
[c]{lll}%
$\frac{1}{21}\left[
\begin{array}
[c]{rrrrr}%
\mathbf{0} & \mathbf{0} & 6 & 18 & 9\\
\mathbf{0} & 14 & \mathbf{0} & -7 & 14\\
13 & 8 & 12 & \mathbf{0} & -8\\
4 & 10 & -15 & 8 & -6\\
16 & -9 & -6 & -2 & 8
\end{array}
\right]  _{34}$ &  & $\frac{1}{21}\left[
\begin{array}
[c]{rrrrr}%
\mathbf{0} & \mathbf{0} & 6 & 9 & 18\\
\mathbf{0} & 14 & \mathbf{0} & -14 & 7\\
20 & 1 & 6 & \mathbf{0} & -2\\
4 & 10 & -15 & 10 & \mathbf{0}\\
-5 & 12 & 12 & 8 & -8
\end{array}
\right]  _{35}$%
\end{tabular}
\
\]
$\ \ $%
\[%
\begin{tabular}
[c]{lll}%
$\frac{1}{15}\left[
\begin{array}
[c]{rrrrr}%
\mathbf{0} & \mathbf{0} & \mathbf{0} & 9 & 12\\
\mathbf{0} & -12 & -9 & \mathbf{0} & \mathbf{0}\\
5 & -6 & 8 & -8 & 6\\
10 & -3 & 4 & 8 & -6\\
10 & 6 & -8 & -4 & 3
\end{array}
\right]  _{36}$ &  & $\frac{1}{13}\left[
\begin{array}
[c]{rrrrr}%
\mathbf{0} & \mathbf{0} & 4 & 3 & 12\\
1 & 10 & \mathbf{0} & -8 & 2\\
2 & 7 & 6 & 8 & -4\\
8 & 2 & -9 & 4 & 2\\
-10 & 4 & -6 & 4 & 1
\end{array}
\right]  _{37}$%
\end{tabular}
\
\]
$\ \ $%
\[%
\begin{tabular}
[c]{lll}%
$\frac{1}{15}\left[
\begin{array}
[c]{rrrrr}%
\mathbf{0} & \mathbf{0} & 5 & -10 & 10\\
3 & -12 & \mathbf{0} & 6 & 6\\
12 & 1 & 8 & \mathbf{0} & -4\\
-6 & 4 & 10 & 8 & 3\\
6 & 8 & -6 & 5 & 8
\end{array}
\right]  _{38}$ &  & $\frac{1}{33}\left[
\begin{array}
[c]{rrrrr}%
\mathbf{0} & \mathbf{0} & 11 & 22 & 22\\
8 & 15 & \mathbf{0} & -20 & 20\\
10 & 12 & 26 & \mathbf{0} & -13\\
30 & -12 & -6 & 3 & \mathbf{0}\\
5 & 24 & -16 & 14 & -6
\end{array}
\right]  _{39}$%
\end{tabular}
\
\]
$\ \ $%
\[%
\begin{tabular}
[c]{lll}%
$\frac{1}{65}\left[
\begin{array}
[c]{rrrrr}%
\mathbf{0} & \mathbf{0} & \mathbf{0} & 25 & 60\\
-7 & 60 & 24 & \mathbf{0} & \mathbf{0}\\
24 & -20 & 57 & \mathbf{0} & \mathbf{0}\\
36 & 9 & -12 & -48 & 20\\
48 & 12 & -16 & 36 & -15
\end{array}
\right]  _{40}$ &  & $\frac{1}{75}\left[
\begin{array}
[c]{rrrrr}%
\mathbf{0} & \mathbf{0} & \mathbf{0} & 45 & 60\\
\mathbf{0} & \mathbf{0} & 60 & 36 & -27\\
-10 & 70 & 15 & -16 & 12\\
50 & 25 & -30 & 32 & -24\\
55 & -10 & 30 & -32 & 24
\end{array}
\right]  _{41}$%
\end{tabular}
\
\]
$\ \ $%
\[%
\begin{tabular}
[c]{lll}%
$\frac{1}{65}\left[
\begin{array}
[c]{rrrrr}%
\mathbf{0} & \mathbf{0} & \mathbf{0} & 39 & 52\\
\mathbf{0} & \mathbf{0} & 25 & -48 & 36\\
25 & -60 & \mathbf{0} & \mathbf{0} & \mathbf{0}\\
36 & 15 & 48 & 16 & -12\\
48 & 20 & -36 & -12 & 9
\end{array}
\right]  _{42}$ &  & $\frac{1}{125}\left[
\begin{array}
[c]{rrrrr}%
\mathbf{0} & \mathbf{0} & \mathbf{0} & 75 & 100\\
\mathbf{0} & 75 & 100 & \mathbf{0} & \mathbf{0}\\
\mathbf{0} & 60 & -45 & 80 & -60\\
100 & -48 & 36 & 36 & -27\\
75 & 64 & -48 & -48 & 36
\end{array}
\right]  _{43}$%
\end{tabular}
\ .
\]
$\ $%

%TCIMACRO{\TeXButton{\endgroup}{\endgroup}}%
%BeginExpansion
\endgroup
%EndExpansion

\section{Symmetric rational orthogonal matrices\label{sec3}}

A square matrix $X$ is \emph{involutory} if $X^{2}=I$. It is well known that a
matrix $X\in U(n)$ is hermitian if and only if it is involutory. In
particular, a matrix $X\in O(n)$ is symmetric if and only if it is involutory.

One can easily formulate the symmetric analogues of Conjecture \ref{con1} and
Conjecture \ref{con2}. For the sake of simplicity we shall formulate just the
combined conjecture.

\begin{conjecture}
\label{con3}For any hermitian $X\in U(n)$ there exists a symmetric $Z\in
O_{n}(\mathbb{Q})$ such that $\underline{X}=\underline{Z}$ and $\mathrm{Tr}%
\left(  X\right)  =\mathrm{Tr}\left(  Z\right)  $.
\end{conjecture}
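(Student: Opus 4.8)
Since Conjecture \ref{con3} is stated in full generality but, like Conjectures \ref{con1} and \ref{con2}, is realistically only within reach for small $n$, the plan is to mirror the strategy of the main theorem while exploiting the extra structure that hermiticity imposes. First I would record two elementary reductions. Because $X=X^{\dagger}$ forces $\underline{X}=\underline{X}^{T}$, only symmetric zero-patterns can occur; and because a hermitian $X\in U(n)$ is involutory with eigenvalues $\pm1$, writing $p$ and $q$ for the multiplicities of $+1$ and $-1$ we have $p+q=n$ and $\mathrm{Tr}(X)=p-q$. Thus prescribing $\mathrm{Tr}(X)$ is the same as prescribing the signature $(p,q)$, and the sought $Z$ must be a symmetric rational orthogonal involution of the same signature. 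I would then reduce to the indecomposable case as before: a symmetric support that is decomposable splits, under a simultaneous row-and-column permutation, into a direct sum of smaller symmetric hermitian unitaries whose traces add, so it suffices to realize each indecomposable block with its own prescribed trace and take the direct sum.

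Next I would reformulate the target geometrically. A symmetric $Z\in O_{n}(\mathbb{Q})$ with $Z^{2}=I$ and signature $(p,q)$ is exactly $Z=I-2P$, where $P$ is the rational orthogonal projection onto the $(-1)$-eigenspace, a rational subspace $W$ of dimension $q$; conversely any rational $q$-dimensional $W$ yields such a $Z$ via $P=A(A^{T}A)^{-1}A^{T}$ for any rational $A$ whose columns span $W$. The off-diagonal support of $Z$ is then governed by the off-diagonal zeros of $P$, while a diagonal entry $Z_{ii}=1-2P_{ii}$ vanishes precisely when $P_{ii}=1/2$. Hence the problem becomes: for each admissible $(p,q)$, find a rational $q$-dimensional subspace whose projection matrix exhibits the prescribed zero-pattern. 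This identifies the natural search space.

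With these reductions in hand, I would go pattern by pattern through the indecomposable symmetric SQ zero-patterns of size $n\leq5$ taken from the tables in \cite{sz}, determine for each one the traces actually attainable by a hermitian $X\in U(n)$ with that support, and then exhibit an explicit symmetric matrix in $O_{n}(\mathbb{Q})$ for each attainable trace. Wherever possible I would obtain these by an exhaustive rational search over low-height projections; for the stubborn cases I would fall back on ad hoc constructions, for instance reflections $I-2vv^{T}/(v^{T}v)$, which realize signature $(n-1,1)$, together with orthogonal sums and Householder-type products assembled so that the result stays symmetric.

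I expect the genuine difficulty to lie in the coupling of the two constraints. Matching the support alone is the content of Conjecture \ref{con2}, but here the signature is an additional, rigid invariant: for a fixed symmetric support only certain values of $\mathrm{Tr}$ are realizable even over $\mathbb{C}$, and the subtle interaction is that forcing a diagonal entry to be zero pins $P_{ii}=1/2$, which constrains how the $\pm1$ eigenspaces distribute and hence limits the achievable signature. The crux of the proof will therefore be to pin down, for each pattern, exactly which traces occur and then to produce a rational projection meeting the off-diagonal and the diagonal constraints simultaneously — the latter being where rationality is most likely to fight the support, and where ad hoc constructions will probably be unavoidable.
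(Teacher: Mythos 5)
You have set out a program, not a proof, and it is important to be clear that the statement you are addressing is Conjecture~\ref{con3}: the paper does not prove it, and neither do you. What the paper actually does in Section~\ref{sec3} is exactly the plan you describe --- reduce to indecomposable symmetric SQ zero-patterns, use the involutory structure to see that $\mathrm{Tr}(X)$ is an integer of the same parity as $n$ (so one may normalize $\mathrm{Tr}\geq 0$), prove a further obstruction on attainable traces (Proposition~\ref{n-2}: no indecomposable hermitian unitary with an off-diagonal zero entry has trace $n-2$), and then exhibit explicit symmetric matrices in $O_{n}(\mathbb{Q})$ pattern by pattern for $n\leq 5$. Your reformulation via $Z=I-2P$ with $P$ a rational orthogonal projection, and the observation that a zero diagonal entry pins $P_{ii}=1/2$, is correct and a reasonable way to organize the search, but it does not remove the difficulty; it only renames it.

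The genuine gap is that the case-by-case program you propose is known --- by the paper's own account --- to stall. The authors report that for exactly two congruence classes of symmetric zero-patterns of size $5$ (displayed in Section~\ref{sec5}, Problem~2) they were unable to find any symmetric rational orthogonal realization; the best examples they could produce involve irrationalities such as $\sqrt{3}$, $\sqrt{5}$, $\sqrt{15}$, $\sqrt{17}$, $\sqrt{22}$, and $\sqrt{195}$. So even the restricted claim ``Conjecture~\ref{con3} holds for $n\leq 5$'' is not established, and your step ``exhibit an explicit symmetric matrix in $O_{n}(\mathbb{Q})$ for each attainable trace'' is precisely the point at which the argument is incomplete. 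A second, smaller gap: your step ``determine for each pattern the traces actually attainable by a hermitian $X\in U(n)$ with that support'' is itself nontrivial and requires obstruction results of the type of Proposition~\ref{n-2}; you assert it can be done but give no mechanism. If you want to contribute something beyond the paper, the place to aim is either a rationality argument for the two recalcitrant patterns (e.g., showing the variety of symmetric orthogonal involutions with that support has a rational point, or proving it has none) or a structural, non-enumerative argument replacing the exhaustive search.
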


Let $X=X^{\dagger}\in U(n)$. Then $X^{2}=I_{n}$ and so the eigenvalues of $X$
belong to $\left\{  \pm1\right\}  $. Consequently, Tr$\left(  X\right)  $ is
an integer congruent to $n\operatorname{mod}2$. Since $\underline
{-X}=\underline{X}$ and Tr$\left(  -X\right)  =$ $-$Tr$\left(  X\right)  $, in
proving this conjecture we may assume that Tr$\left(  X\right)  \geq0$.
Clearly, we can also assume that the zero-pattern $\underline{X}$ is
indecomposable (we also know that it is necessarily SQ). There are further
restrictions on possible values of the trace.

\begin{proposition}
\label{n-2}There is no indecomposable hermitian matrix $X\in U(n)$, $n\geq2$,
with $X_{1,2}=0$ and $\mathrm{Tr}\left(  X\right)  =n-2$.
\end{proposition}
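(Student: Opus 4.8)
The plan is to exploit the very rigid structure that the trace condition forces on $X$. Since $X=X^{\dagger}\in U(n)$ we have $X^{2}=I$, so every eigenvalue lies in $\{\pm1\}$; writing $p$ and $q$ for the multiplicities of $+1$ and $-1$ we get $p+q=n$ together with $p-q=\mathrm{Tr}(X)=n-2$, hence $p=n-1$ and $q=1$. Thus $X$ has a simple eigenvalue $-1$, with a unit eigenvector $v$, and acts as the identity on $v^{\perp}$. The spectral decomposition then yields the explicit Householder form
\[
X=I-2vv^{\dagger},\qquad X_{i,j}=\delta_{i,j}-2v_{i}\overline{v_{j}},
\]
so the entire matrix is determined by the single vector $v$.

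Next I would feed in the hypothesis $X_{1,2}=0$. From the formula this reads $-2v_{1}\overline{v_{2}}=0$, i.e. $v_{1}\overline{v_{2}}=0$, so either $v_{1}=0$ or $v_{2}=0$. The two cases are symmetric, so suppose $v_{1}=0$. Then the whole first row of $vv^{\dagger}$ vanishes, whence $X_{1,j}=\delta_{1,j}$ for every $j$; that is, the first row of $X$ equals $e_{1}^{T}$, with $X_{1,j}=0$ for all $j\geq2$.

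Finally I would invoke the definition of decomposability. The relation $X_{1,j}=0$ for all $j\geq2$ exhibits a $1\times(n-1)$ zero submatrix $X_{\{1\}}^{\{2,\dots,n\}}$ (taking $s=1$), so $X$ is decomposable. The case $v_{2}=0$ is identical after interchanging the roles of rows and columns $1$ and $2$, again producing a $1\times(n-1)$ zero submatrix. Either way this contradicts the assumed indecomposability of $X$, which completes the argument.

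The proof is short and I do not expect a serious obstacle; the only real content is the observation that $\mathrm{Tr}(X)=n-2$ pins the spectrum down to a single $-1$ eigenvalue and thereby realizes $X$ as $I-2vv^{\dagger}$. Once that normal form is in hand, the single zero-entry hypothesis kills one coordinate of $v$ and indecomposability fails for the cheapest possible reason, an isolated row. The one point I would state carefully is that $n\geq2$ is exactly what makes the submatrix $X_{\{1\}}^{\{2,\dots,n\}}$ nonempty, so the hypothesis $n\geq2$ is genuinely used; for $n=2$ the conclusion reduces to the trivial fact that a hermitian unitary $2\times2$ matrix with a zero off-diagonal entry is diagonal, hence decomposable.
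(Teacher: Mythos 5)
Your proof is correct, and it takes a somewhat different route from the paper's, though both turn on the same key fact: the trace condition forces the $-1$-eigenspace of $X$ to be one-dimensional. You then invoke the spectral decomposition to write $X=I-2vv^{\dagger}$ for a unit $-1$-eigenvector $v$, after which $X_{1,2}=0$ reads $v_{1}\overline{v_{2}}=0$ and whichever coordinate vanishes turns an entire row of $X$ into a standard basis vector, killing indecomposability. The paper avoids the explicit Householder form: it first uses indecomposability to get $X_{2,2}\neq 1$, forms the nonzero vector $v=Xe_{2}-e_{2}$, checks directly that $Xv=-v$ and $v\perp e_{1}$, and concludes from the one-dimensionality of the $-1$-eigenspace that $e_{1}$ lies in the $+1$-eigenspace $v^{\perp}$, so $Xe_{1}=e_{1}$ and the first column is isolated --- the same contradiction reached by bare-hands eigenvector manipulation. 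Your version is cleaner and makes the case structure ($v_{1}=0$ or $v_{2}=0$) transparent, at the cost of quoting the spectral theorem; the paper's is more elementary and uses indecomposability twice rather than once. Your closing remark about where $n\geq 2$ enters is accurate.
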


\begin{proof}
Suppose that such a matrix, $X$, exists. As $X^{2}=I$ and $X\neq\pm I$, the
eigenvalues of $X$ are $+1$ and $-1$ and the two eigenspaces of $X$ are
orthogonal to each other. By indecomposability we have $X_{2,2}\neq1$. Let
$\{e_{1},\ldots,e_{n}\}$ be the standard basis of $\mathbb{C}^{n}$. Since
$X_{1,2}=0$, the vector $Xe_{2}$ is orthogonal to $e_{1}$ and also $Xe_{2}\neq
e_{2}$. Thus the vector $v=Xe_{2}-e_{2}$ is nonzero and $v\perp e_{1}$. As
$Xv=-v$ and the $-1$-eigenspace of $X$ is 1-dimensional, we conclude that the
subspace $v^{\perp}$ is the $+1$-eigenspace of $X$. Hence $Xe_{1}=e_{1}$,
\emph{i.e.}, $X_{1,1}=1$. This contradicts the indecomposability of $X$.
\end{proof}

\bigskip

The objective of this section is to provide a support for the above conjecture
by constructing examples of symmetric rational orthogonal matrices with
specified indecomposable zero-pattern and specified trace. We shall consider
zero-patterns of size $n\leq5$.

Two symmetric $(0,1)$-matrices $X$ and $Y$ are said to be \emph{congruent }if
there is a permutation matrix $P$ such that $PXP^{T}=Y$. In graph-theoretical
terms, the permutation matrix $P$ represents an isomorphism between the
undirected graph with adjacency matrix $X$ and the undirected graph with
adjacency matrix $Y$.

Let $X=[X_{i,j}]\in U(n)$ be a hermitian matrix. We say that $X$ is in
\emph{quasi-normal form} if Tr$(X)\geq0$ and $X_{1,1}\geq X_{2,2}\geq
\cdots\geq X_{n,n}$. In our list a matrix $X$ will be written in the form%
\[
X=\frac{1}{d}\left[
\begin{array}
[c]{ccc}%
\ast & \cdots & \ast\\
\vdots & \ddots & \vdots\\
\ast & \cdots & \ast
\end{array}
\right]  _{k,l}^{t},
\]
where $k$ and $l$ are simply numerical labels and $t$ is the trace of the
matrix. The index $k$ corresponds to the one used for the matrices in Section
\ref{sec2}. The index $l$ specifies the congruence class of symmetric
zero-patterns within the $k$-th equivalence class.

Our list is not complete. We are in fact unable to construct symmetric
rational orthogonal matrices with specified trace for exactly two among all
zero-patterns. For these matrices, we give examples of matrices \emph{as}
\emph{close as possible} to symmetric rational one in Section \ref{sec5}. All
denominators in the list are minimal except for a few cases, when $n=5$.
Exceptions are the cases $(k,l)=\left(  2,2\right)  ,\left(  3,2\right)
,\left(  6,1\right)  ,(11,2),(22,2)$.

\subsection{$n=2$%
\[%
\protect\begin{tabular}
[c]{l}%
$\frac{1}{5}\left[
\protect\begin{array}
[c]{rr}%
3 & 4\\
4 & -3
\protect\end{array}
\right]  _{1}^{0}$%
\protect\end{tabular}
.
\]
}

\subsection{$n=3$}%

%TCIMACRO{\TeXButton{\begingroup}{\begingroup}}%
%BeginExpansion
\begingroup
%EndExpansion%
%TCIMACRO{\TeXButton{\scalefont{x}}{\scalefont{1}}}%
%BeginExpansion
\scalefont{1}%
%EndExpansion%
\[%
\begin{tabular}
[c]{lll}%
$\frac{1}{25}\left[
\begin{array}
[c]{rrr}%
16 & 12 & 15\\
12 & 9 & -20\\
15 & -20 & \mathbf{0}%
\end{array}
\right]  _{1}^{1}$ &  & $\frac{1}{3}\left[
\begin{array}
[c]{rrr}%
2 & -1 & 2\\
-1 & 2 & 2\\
2 & 2 & -1
\end{array}
\right]  _{2}^{1}$%
\end{tabular}
.
\]
%

%TCIMACRO{\TeXButton{\endgroup}{\endgroup}}%
%BeginExpansion
\endgroup
%EndExpansion

\subsection{$n=4$}%

%TCIMACRO{\TeXButton{\begingroup}{\begingroup}}%
%BeginExpansion
\begingroup
%EndExpansion%
%TCIMACRO{\TeXButton{\scalefont{x}}{\scalefont{1}}}%
%BeginExpansion
\scalefont{1}%
%EndExpansion

\begin{description}
\item
\[%
\begin{tabular}
[c]{ccc}%
$\frac{1}{9}\left[
\begin{array}
[c]{rrrr}%
8 & -3 & 2 & 2\\
-3 & \mathbf{0} & 6 & 6\\
2 & 6 & -4 & 5\\
2 & 6 & 5 & -4
\end{array}
\right]  _{1}^{0}$ &  & $\frac{1}{9}\left[
\begin{array}
[c]{rrrr}%
8 & 2 & -2 & 3\\
2 & 5 & 4 & -6\\
-2 & 4 & 5 & 6\\
3 & -6 & 6 & \mathbf{0}%
\end{array}
\right]  _{1}^{2}$%
\end{tabular}
\
\]%
\[%
\begin{tabular}
[c]{ccc}%
$\frac{1}{9}\left[
\begin{array}
[c]{rrrr}%
6 & \mathbf{0} & 3 & -6\\
\mathbf{0} & 1 & 8 & 4\\
3 & 8 & -2 & 2\\
-6 & 4 & 2 & -5
\end{array}
\right]  _{2,1}^{0}$ &  & $\frac{1}{9}\left[
\begin{array}
[c]{rrrr}%
8 & 5 & -10 & 6\\
5 & \mathbf{0} & 10 & 10\\
-10 & 10 & \mathbf{0} & 5\\
6 & 10 & 5 & -8
\end{array}
\right]  _{2,2}^{0}$%
\end{tabular}
\
\]%
\[%
\begin{tabular}
[c]{lll}%
$\frac{1}{33}\left[
\begin{array}
[c]{rrrr}%
16 & 7 & \mathbf{0} & -28\\
7 & \mathbf{0} & 32 & 4\\
\mathbf{0} & 32 & -1 & 8\\
-28 & 4 & 8 & -15
\end{array}
\right]  _{3}^{0}$ &  & $\frac{1}{3}\left[
\begin{array}
[c]{rrrr}%
2 & \mathbf{0} & 2 & 1\\
\mathbf{0} & 2 & 1 & -2\\
2 & 1 & -2 & \mathbf{0}\\
1 & -2 & \mathbf{0} & -2
\end{array}
\right]  _{4,1}^{0}$%
\end{tabular}
\]%
\[%
\begin{tabular}
[c]{ccc}%
$\frac{1}{3}\left[
\begin{array}
[c]{rrrr}%
1 & 2 & -2 & \mathbf{0}\\
2 & \mathbf{0} & 1 & 2\\
-2 & 1 & \mathbf{0} & 2\\
\mathbf{0} & 2 & 2 & -1
\end{array}
\right]  _{4,2}^{0}$ &  & $\frac{1}{65}\left[
\begin{array}
[c]{rrrr}%
25 & \mathbf{0} & -36 & 48\\
\mathbf{0} & \mathbf{0} & 52 & 39\\
-36 & 52 & -9 & 12\\
48 & 39 & 12 & -16
\end{array}
\right]  _{5}^{0}$%
\end{tabular}
\]%
\[%
\begin{tabular}
[c]{lll}%
$\frac{1}{2}\left[
\begin{array}
[c]{rrrr}%
1 & -1 & 1 & 1\\
-1 & 1 & 1 & 1\\
1 & 1 & -1 & 1\\
1 & 1 & 1 & -1
\end{array}
\right]  _{8}^{0}$ &  & $\frac{1}{2}\left[
\begin{array}
[c]{rrrr}%
1 & 1 & 1 & -1\\
1 & 1 & -1 & 1\\
1 & -1 & 1 & 1\\
-1 & 1 & 1 & 1
\end{array}
\right]  _{8}^{2}$%
\end{tabular}
.
\]

\end{description}%

%TCIMACRO{\TeXButton{\endgroup}{\endgroup}}%
%BeginExpansion
\endgroup
%EndExpansion

\subsection{$n=5$}%

%TCIMACRO{\TeXButton{\begingroup}{\begingroup}}%
%BeginExpansion
\begingroup
%EndExpansion%
%TCIMACRO{\TeXButton{\scalefont{x}}{\scalefont{1}}}%
%BeginExpansion
\scalefont{1}%
%EndExpansion

\begin{description}
\item
\[%
\begin{tabular}
[c]{ccc}%
$\frac{1}{4}\left[
\begin{array}
[c]{rrrrr}%
3 & 1 & 1 & 1 & -2\\
1 & 3 & -1 & -1 & 2\\
1 & -1 & 3 & -1 & 2\\
1 & -1 & -1 & 3 & 2\\
-2 & 2 & 2 & 2 & \mathbf{0}%
\end{array}
\right]  _{1}^{3}$ &  & $\frac{1}{27}\left[
\begin{array}
[c]{rrrrr}%
19 & 4 & -12 & 12 & 8\\
4 & 16 & 6 & -15 & 14\\
-12 & 6 & 9 & 18 & 12\\
12 & -15 & 18 & \mathbf{0} & 6\\
8 & 14 & 12 & 6 & -17
\end{array}
\right]  _{1}^{1}$%
\end{tabular}
\ \ \ \
\]%
\[%
\begin{tabular}
[c]{ccc}%
$\frac{1}{7}\left[
\begin{array}
[c]{rrrrr}%
4 & -3 & 2 & 4 & 2\\
-3 & 4 & 2 & 4 & 2\\
2 & 2 & 3 & -4 & 4\\
4 & 4 & -4 & 1 & \mathbf{0}\\
2 & 2 & 4 & \mathbf{0} & -5
\end{array}
\right]  _{2,1}^{1}$ &  & $\frac{1}{375}\left[
\begin{array}
[c]{rrrrr}%
200 & 90 & 120 & 125 & -250\\
90 & 168 & -276 & 150 & 75\\
120 & -276 & 7 & 200 & 100\\
125 & 150 & 200 & \mathbf{0} & 250\\
-250 & 75 & 100 & 250 & \mathbf{0}%
\end{array}
\right]  _{2,2}^{1}$%
\end{tabular}
\ \ \
\]%
\[%
\begin{tabular}
[c]{ccc}%
$\frac{1}{11}\left[
\begin{array}
[c]{rrrrr}%
8 & 4 & 1 & 2 & -6\\
4 & 5 & -4 & \mathbf{0} & 8\\
1 & -4 & \mathbf{0} & 10 & 2\\
2 & \mathbf{0} & 10 & -1 & 4\\
-6 & 8 & 2 & 4 & -1
\end{array}
\right]  _{3,1}^{1}$ &  & $\frac{1}{325}\left[
\begin{array}
[c]{rrrrr}%
245 & 84 & 80 & -140 & 112\\
84 & 80 & 112 & 35 & -280\\
80 & 112 & \mathbf{0} & 280 & 91\\
-140 & 35 & 280 & \mathbf{0} & 80\\
112 & -280 & 91 & 80 & \mathbf{0}%
\end{array}
\right]  _{3,2}^{1}$%
\end{tabular}
\ \ \
\]%
\[%
\begin{tabular}
[c]{ccc}%
$\frac{1}{5}\left[
\begin{array}
[c]{rrrrr}%
3 & 2 & -2 & 2 & 2\\
2 & 2 & 1 & -4 & \mathbf{0}\\
-2 & 1 & 2 & \mathbf{0} & 4\\
2 & -4 & \mathbf{0} & -1 & 2\\
2 & \mathbf{0} & 4 & 2 & -1
\end{array}
\right]  _{4,1}^{1}$ &  & $\frac{1}{5}\left[
\begin{array}
[c]{rrrrr}%
4 & \mathbf{0} & 1 & -2 & 2\\
\mathbf{0} & 4 & -2 & 1 & 2\\
1 & -2 & \mathbf{0} & 4 & 2\\
-2 & 1 & 4 & \mathbf{0} & 2\\
2 & 2 & 2 & 2 & -3
\end{array}
\right]  _{4,2}^{1}$%
\end{tabular}
\ \ \ \
\]%
\[%
\begin{tabular}
[c]{lll}%
$\frac{1}{147}\left[
\begin{array}
[c]{rrrrr}%
145 & 8 & \mathbf{0} & 14 & -18\\
8 & 51 & 80 & -112 & \mathbf{0}\\
\mathbf{0} & 80 & 47 & 70 & 90\\
14 & -112 & 70 & \mathbf{0} & 63\\
-18 & \mathbf{0} & 90 & 63 & -96
\end{array}
\right]  _{5,1}^{1}$ &  & $\frac{1}{625}\left[
\begin{array}
[c]{rrrrr}%
256 & 240 & 192 & 375 & 300\\
240 & 225 & 180 & \mathbf{0} & -500\\
192 & 180 & 144 & -500 & 225\\
375 & \mathbf{0} & -500 & \mathbf{0} & \mathbf{0}\\
300 & -500 & 225 & \mathbf{0} & \mathbf{0}%
\end{array}
\right]  _{6}^{1}$%
\end{tabular}
\ \ \
\]%
\[%
\begin{tabular}
[c]{lll}%
$\frac{1}{75}\left[
\begin{array}
[c]{rrrrr}%
50 & -25 & \mathbf{0} & 30 & 40\\
-25 & 50 & \mathbf{0} & 30 & 40\\
\mathbf{0} & \mathbf{0} & \mathbf{0} & 60 & -45\\
30 & 30 & 60 & -9 & -12\\
40 & 40 & -45 & -12 & -16
\end{array}
\right]  _{7}^{1}$ &  & $\frac{1}{25}\left[
\begin{array}
[c]{ccccc}%
16 & 12 & \mathbf{0} & 12 & -9\\
12 & 9 & \mathbf{0} & -16 & 12\\
\mathbf{0} & \mathbf{0} & \mathbf{0} & 15 & 20\\
12 & -16 & 15 & \mathbf{0} & \mathbf{0}\\
-9 & 12 & 20 & \mathbf{0} & \mathbf{0}%
\end{array}
\right]  _{8}^{1}$%
\end{tabular}
\ \ \
\]%
\[%
\begin{tabular}
[c]{lll}%
$\frac{1}{9}\left[
\begin{array}
[c]{rrrrr}%
8 & 2 & 2 & \mathbf{0} & -3\\
2 & 4 & -6 & 3 & 4\\
2 & -6 & 1 & 6 & 2\\
\mathbf{0} & 3 & 6 & \mathbf{0} & 6\\
-3 & 4 & 2 & 6 & -4
\end{array}
\right]  _{9}^{1}$ &  & $\frac{1}{27}\left[
\begin{array}
[c]{rrrrr}%
19 & 12 & 8 & -12 & 4\\
12 & 5 & -20 & 12 & 4\\
8 & -20 & 3 & \mathbf{0} & 16\\
-12 & 12 & \mathbf{0} & \mathbf{0} & 21\\
4 & 4 & 16 & 21 & \mathbf{0}%
\end{array}
\right]  _{10}^{1}$%
\end{tabular}
\ \ \
\]%
\[
\frac{1}{27}\left[
\begin{array}
[c]{rrrrr}%
20 & -12 & 10 & 9 & 2\\
-12 & 6 & 15 & 18 & \mathbf{0}\\
10 & 15 & 2 & \mathbf{0} & -20\\
9 & 18 & \mathbf{0} & \mathbf{0} & 18\\
2 & \mathbf{0} & -20 & 18 & -1
\end{array}
\right]  _{11,1}^{1}%
\]%
\[
\frac{1}{78625}\left[
\begin{array}
[c]{rrrrr}%
50320 & 27156 & \mathbf{0} & -46620 & 27183\\
27156 & 28305 & -43680 & 51408 & 9620\\
\mathbf{0} & -43680 & \mathbf{0} & 12025 & 64260\\
-46620 & 51408 & 12025 & \mathbf{0} & 34944\\
27183 & 9620 & 64260 & 34944 & \mathbf{0}%
\end{array}
\right]  _{11,2}^{1}%
\]%
\[%
\begin{tabular}
[c]{lll}%
$\frac{1}{9}\left[
\begin{array}
[c]{rrrrr}%
4 & 4 & 2 & 6 & 3\\
4 & 4 & 2 & -3 & -6\\
2 & 2 & 1 & -6 & 6\\
6 & -3 & -6 & \mathbf{0} & \mathbf{0}\\
3 & -6 & 6 & \mathbf{0} & \mathbf{0}%
\end{array}
\right]  _{13}^{1}$ &  & $\frac{1}{9}\left[
\begin{array}
[c]{rrrrr}%
8 & \mathbf{0} & -3 & 2 & 2\\
\mathbf{0} & 7 & \mathbf{0} & 4 & -4\\
-3 & \mathbf{0} & \mathbf{0} & 6 & 6\\
2 & 4 & 6 & -3 & 4\\
2 & -4 & 6 & 4 & -3
\end{array}
\right]  _{17}^{1}$%
\end{tabular}
\ \ \
\]
$\ \ $%
\[%
\begin{tabular}
[c]{lll}%
$\frac{1}{9}\left[
\begin{array}
[c]{rrrrr}%
5 & 4 & \mathbf{0} & -6 & 2\\
4 & 3 & -4 & 6 & 2\\
\mathbf{0} & -4 & 1 & \mathbf{0} & 8\\
-6 & 6 & \mathbf{0} & \mathbf{0} & 3\\
2 & 2 & 8 & 3 & \mathbf{0}%
\end{array}
\right]  _{18}^{1}$ &  & $\frac{1}{441}\left[
\begin{array}
[c]{rrrrr}%
400 & \mathbf{0} & 100 & 105 & 116\\
\mathbf{0} & 400 & 80 & 84 & -145\\
100 & 80 & 41 & -420 & \mathbf{0}\\
105 & 84 & -420 & \mathbf{0} & \mathbf{0}\\
116 & -145 & \mathbf{0} & \mathbf{0} & -400
\end{array}
\right]  _{19}^{1}$%
\end{tabular}
\ \ \
\]%
\[%
\begin{tabular}
[c]{lll}%
$\frac{1}{6}\left[
\begin{array}
[c]{rrrrr}%
3 & 3 & 3 & -3 & \mathbf{0}\\
3 & 3 & -3 & 3 & \mathbf{0}\\
3 & -3 & 1 & 1 & 4\\
-3 & 3 & 1 & 1 & 4\\
\mathbf{0} & \mathbf{0} & 4 & 4 & -2
\end{array}
\right]  _{20}^{1}$ &  & $\frac{1}{21}\left[
\begin{array}
[c]{rrrrr}%
18 & \mathbf{0} & 6 & \mathbf{0} & -9\\
\mathbf{0} & 17 & 6 & -10 & 4\\
6 & 6 & \mathbf{0} & 15 & 12\\
\mathbf{0} & -10 & 15 & -4 & 10\\
-9 & 4 & 12 & 10 & -10
\end{array}
\right]  _{21}^{1}$%
\end{tabular}
\ \ \
\]%
\[%
\begin{tabular}
[c]{ccc}%
$\frac{1}{15}\left[
\begin{array}
[c]{rrrrr}%
10 & \mathbf{0} & 3 & 4 & 10\\
\mathbf{0} & 10 & 6 & 8 & -5\\
3 & 6 & 6 & -12 & \mathbf{0}\\
4 & 8 & -12 & -1 & \mathbf{0}\\
10 & -5 & \mathbf{0} & \mathbf{0} & -10
\end{array}
\right]  _{22,1}^{1}$ &  & $\frac{1}{75}\left[
\begin{array}
[c]{rrrrr}%
39 & \mathbf{0} & -48 & 30 & 30\\
\mathbf{0} & 25 & \mathbf{0} & -50 & 50\\
-48 & \mathbf{0} & 11 & 40 & 40\\
30 & -50 & 40 & \mathbf{0} & 25\\
30 & 50 & 40 & 25 & \mathbf{0}%
\end{array}
\right]  _{22,2}^{1}$%
\end{tabular}
\ \ \
\]
$\ \ $%
\[%
\begin{tabular}
[c]{lll}%
$\frac{1}{5}\left[
\begin{array}
[c]{rrrrr}%
3 & 2 & 2 & -2 & -2\\
2 & 3 & -2 & 2 & 2\\
2 & -2 & 3 & -2 & 2\\
-2 & 2 & 2 & -2 & 3\\
-2 & 2 & 2 & 3 & -2
\end{array}
\right]  _{23}^{1}$ &  & $\frac{1}{5}\left[
\begin{array}
[c]{rrrrr}%
3 & 2 & 2 & 2 & -2\\
2 & 3 & -2 & -2 & 2\\
2 & -2 & 3 & -2 & 2\\
2 & -2 & -2 & 3 & 2\\
-2 & 2 & 2 & 2 & 3
\end{array}
\right]  _{23}^{3}$%
\end{tabular}
\ \ \ .
\]
$\ $
\end{description}

%

%TCIMACRO{\TeXButton{\endgroup}{\endgroup}}%
%BeginExpansion
\endgroup
%EndExpansion

\section{Infinite families of rational orthogonal matrices\label{sec4}}

In this section we employ different techniques to construct infinite families
of symmetric rational orthogonal matrices with specified zero-pattern and
trace. The following well-known fact will be useful. We include a proof for
the sake of completeness.

\begin{proposition}
\label{dense}The set $SO_{n}(\mathbb{Q})$ is dense in $SO(n)$ (in Euclidean topology).
\end{proposition}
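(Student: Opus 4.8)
The plan is to use the Cayley transform to parametrize a large open piece of $SO(n)$ by skew-symmetric matrices, and then exploit the fact that rational skew-symmetric matrices are dense among all skew-symmetric matrices. For a real skew-symmetric matrix $A$ (so $A^{T}=-A$) the matrix $I+A$ is always invertible, since the eigenvalues of $A$ are purely imaginary; one may therefore set $c(A)=(I-A)(I+A)^{-1}$. A direct check shows $c(A)\in SO(n)$: orthogonality follows because $I-A$ and $(I+A)^{-1}$ commute and $(I-A)^{T}=I+A$, while $\det c(A)=\det(I-A)/\det(I+A)=1$ because $\det(I-A)=\det(I+A)$ for skew-symmetric $A$. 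The crucial observation is that $c$ is a rational map in the entries of $A$, so if $A$ has rational entries then $c(A)\in SO_{n}(\mathbb{Q})$.

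Next I would push density through $c$. The rational skew-symmetric matrices form a dense subset of the real vector space of all skew-symmetric matrices, and $c$ is continuous on this space. Hence the image of the rational skew-symmetric matrices under $c$, which lies in $SO_{n}(\mathbb{Q})$, is dense in the full image $\mathcal{C}:=c(\{A:A^{T}=-A\})$.

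It remains to identify $\mathcal{C}$ and to show it is dense in $SO(n)$. A standard computation shows that $\mathcal{C}$ is precisely $\{Q\in SO(n):\det(I+Q)\neq0\}$, i.e. the set of special orthogonal matrices not having $-1$ as an eigenvalue; the inverse of $c$ on this set is $Q\mapsto(I-Q)(I+Q)^{-1}$, again a rational map. The main obstacle is establishing that this set is dense. For this I would note that $SO(n)$ is a connected real-analytic manifold and that $Q\mapsto\det(I+Q)$ is a polynomial function on it which does not vanish identically, its value at $Q=I$ being $2^{n}$. The zero set of a nonzero real-analytic function on a connected manifold is nowhere dense, so the complement of $\mathcal{C}$ in $SO(n)$ has empty interior and $\mathcal{C}$ is dense. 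Combining the two density statements, $SO_{n}(\mathbb{Q})$ is dense in $SO(n)$.
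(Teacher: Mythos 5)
Your proof is correct and follows essentially the same route as the paper, which simply observes that the Cayley transform maps rational skew-symmetric matrices into $SO_{n}(\mathbb{Q})$ and has dense image. You have merely filled in the details the paper leaves implicit (invertibility of $I+A$, the identification of the image, and the nowhere-density of the exceptional set), all of which are sound.
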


\begin{proof}
It is sufficient to observe that the Cayley transformation
\[
X\longmapsto Y=\frac{I+X}{I-X},
\]
from the space of $n\times n$ real skew-symmetric matrices to $SO(n)$ has
dense image, and if $X$ is a rational matrix so is $Y$.
\end{proof}

\bigskip

Let $\Delta_{n,k}$ be the $n\times n$ zero-pattern all of whose entries are
$1$ except for the first $k$ diagonal entries which are $0$.

\begin{corollary}
Let $0\leq k<l<n$. If there exists $X=X^{T}\in O_{n}(\mathbb{Q})$ with
$\underline{X}=\Delta_{n,l}$, then there exists $Y=Y^{T}\in O_{n}(\mathbb{Q})$
with $\underline{X}=\Delta_{n,k}$ and Tr$\left(  X\right)  =$ Tr$\left(
Y\right)  $.
\end{corollary}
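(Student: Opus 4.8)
The plan is to realize the desired $Y$ as a conjugate of $X$ by a rational rotation, exploiting Proposition \ref{dense}. Note first that conjugation preserves every feature we care about: if $Q\in O(n)$ then $QXQ^{T}$ is again symmetric, orthogonal, involutory, and has the same trace as $X$. So the whole problem reduces to finding a suitable $Q$ for which $QXQ^{T}$ has zero-pattern $\Delta_{n,k}$, and then replacing $Q$ by a nearby rational orthogonal matrix. By iterating, it suffices to treat the case $l=k+1$: one application lowers the number of vanishing diagonal entries from $l$ to $l-1$, and the conclusion (a symmetric rational orthogonal matrix with pattern $\Delta_{n,l-1}$ and the same trace) feeds back in as the hypothesis for the next step.

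So assume $\underline{X}=\Delta_{n,k+1}$; the single diagonal zero I must remove sits in position $k+1$, while all off-diagonal entries and the diagonal entries in positions $k+2,\dots,n$ are nonzero (here $l=k+1<n$ guarantees that position $n$, say, has $X_{n,n}\neq 0$). The key device is to use only rotations that fix the first $k$ coordinate axes, so that the forced zeros $X_{i,i}=0$ for $i\le k$ are preserved exactly and not merely approximately. Concretely, let $R$ be the rotation by a small angle $\theta$ in the $(k+1,j)$-plane for some $j>k+1$ with $X_{j,j}\neq 0$. A direct computation gives $(RXR^{T})_{k+1,k+1}=\sin\theta\,(\sin\theta\,X_{j,j}-2\cos\theta\,X_{k+1,j})$, which is nonzero for all small $\theta\neq0$ since $X_{k+1,j}\neq 0$. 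For $\theta$ small enough the finitely many entries that were nonzero in $X$ remain nonzero in $M:=RXR^{T}$ (an open condition), while the diagonal entries in positions $1,\dots,k$ are untouched because $R$ fixes $e_{1},\dots,e_{k}$. Hence $M$ is a real symmetric involution with $\underline{M}=\Delta_{n,k}$ and $\mathrm{Tr}(M)=\mathrm{Tr}(X)$.

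It remains to make $M$ rational. Since $R$ fixes the first $k$ axes we may write $R=I_{k}\oplus R_{0}$ with $R_{0}\in SO(n-k)$. By Proposition \ref{dense} applied in dimension $n-k$, there is a rational $R_{0}'\in SO_{n-k}(\mathbb{Q})$ as close to $R_{0}$ as we like; put $Q'=I_{k}\oplus R_{0}'$ and $Y=Q'XQ'^{T}$. Then $Y$ is symmetric, rational, orthogonal, involutory, and $\mathrm{Tr}(Y)=\mathrm{Tr}(X)$. Because $Q'$ still fixes $e_{1},\dots,e_{k}$ we get $Y_{i,i}=X_{i,i}=0$ exactly for $i\le k$, and because $Y$ is arbitrarily close to $M$ all the entries that are nonzero in $M$ stay nonzero in $Y$; therefore $\underline{Y}=\Delta_{n,k}$, as desired.

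I anticipate that the one genuine obstacle is precisely the tension between the approximation step and the exact vanishing of the first $k$ diagonal entries: a generic rational approximation of a conjugating rotation would perturb those zeros into small nonzero numbers and spoil the pattern. The resolution above is to confine all rotations, and hence the approximation, to the last $n-k$ coordinates, which keeps the forced zeros structurally intact while still leaving enough freedom to push the $(k+1)$-st diagonal entry off zero. The only routine points left to verify are the elementary $2\times2$ conjugation formula and the finitely many continuity estimates, neither of which presents difficulty.
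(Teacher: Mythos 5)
Your proof is correct and follows essentially the same route as the paper: reduce to $l=k+1$, conjugate by a rotation in a plane containing the $(k+1)$-st axis and one axis with nonzero diagonal entry so as to make the $(k+1,k+1)$ entry nonzero while fixing $e_1,\dots,e_k$, then invoke Proposition \ref{dense} to replace the rotation by a nearby rational one without disturbing the pattern. The only cosmetic difference is that you approximate the whole $SO(n-k)$ block where the paper approximates just the $2\times2$ rotation block, and you spell out the $2\times2$ conjugation computation that the paper leaves implicit.
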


\begin{proof}
Without any loss of generality we may assume that $l=k+1$. Let $X=X^{T}\in
O_{n}(\mathbb{Q})$ be such that $\underline{X}=\Delta_{n,l}$. Set $Y=P\times
P^{T}$, where $P=I_{k}\oplus R\oplus I_{n-l-1}$ and $R$ is the rotation matrix%
\[
\left[
\begin{array}
[c]{rr}%
\cos\theta & -\sin\theta\\
\sin\theta & \cos\theta
\end{array}
\right]  .
\]
Clearly we can choose $\theta\in\mathbb{R}$ such that $\underline{Y}%
=\Delta_{n,k}$. Since the rational points on the unit circle are dense (see
Proposition \ref{dense}), we can replace $R$ with $R_{1}\in SO_{2}%
(\mathbb{Q})$ without affecting the zero-pattern of $Y$.
\end{proof}

\subsection{Symmetric rational orthogonal matrices with few zero entries}

Observe that the matrix $X_{n}=I_{n}-\frac{2}{n}J_{n}$ is rational orthogonal
and involutory, where $J_{n}$ denotes the all-ones matrix. Moreover,
Tr$\left(  X_{n}\right)  =n-2$ and, if $n>2$, $X_{n}$ has no zero entries,
\emph{i.e.}, $\underline{X}=J_{n}$. This $X_{n}$ is often called \emph{Grover
matrix} in the literature of quantum computation (see, \emph{e.g.},
\cite{nc00}).

\begin{proposition}
Let $t=n-2k$ where $k\in\left\{  1,2,...,n-1\right\}  $. Then there exists a
symmetric matrix $X\in O_{n}(\mathbb{Q})$ such that Tr$\left(  X\right)  =t$
and $\underline{X}=J_{n}$.
\end{proposition}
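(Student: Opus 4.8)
We want a symmetric rational orthogonal $n\times n$ matrix with full support $\underline{X}=J_n$ and any prescribed trace $t=n-2k$ with $k\in\{1,\dots,n-1\}$. Since a symmetric orthogonal matrix is involutory, its eigenvalues are $\pm1$, and if it has $k$ eigenvalues equal to $-1$ then $\operatorname{Tr}(X)=n-2k$; so the target trace simply amounts to fixing the dimension of the $-1$-eigenspace. The plan is to build $X$ as an explicit reflection-type matrix that is manifestly rational, symmetric, and orthogonal, then to verify that the support is full.

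\medskip
\textbf{The construction I would use.}
The natural model is the Householder/Grover reflection. The Grover matrix $X_n=I_n-\tfrac{2}{n}J_n$ already handles $k=1$ (trace $n-2$), so I would generalize it. A symmetric involution with exactly $k$ eigenvalues $-1$ is determined by a $k$-dimensional subspace $W=\operatorname{span}(w_1,\dots,w_k)$, via
\[
X=I_n-2P_W,
\]
where $P_W$ is the orthogonal projection onto $W$. If I pick rational vectors $w_1,\dots,w_k$ and arrange them to be orthonormal \emph{after rational scaling}, then $P_W=\sum_i \tfrac{w_i w_i^T}{\langle w_i,w_i\rangle}$ is rational, hence $X$ is a symmetric rational orthogonal matrix with $\operatorname{Tr}(X)=n-2k$. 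So the problem reduces to exhibiting, for each admissible $k$, a rational orthogonal frame $w_1,\dots,w_k$ for which the resulting $X$ has no zero entries. A clean choice is to take $W$ to be the span of $k$ mutually orthogonal vectors built from a rational orthogonal basis of $\mathbb{R}^k$ padded into $\mathbb{R}^n$; equivalently, take $X=Q(I_k^{-}\oplus I_{n-k})Q^T$ for a suitable rational orthogonal $Q$, where $I_k^{-}=-I_k$, and then perturb $Q$ slightly (using rational orthogonal matrices, which are dense in $SO(n)$ by Proposition \ref{dense}) to destroy all accidental zeros while preserving rationality, symmetry of $X$, and the spectrum.

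\medskip
\textbf{Forcing full support.}
Concretely, I would start from the block diagonal rational symmetric orthogonal matrix $X_0=(-I_k)\oplus I_{n-k}$, which has the correct trace $t=n-2k$ but is maximally decomposable. Conjugating by a rational orthogonal $Q\in SO_n(\mathbb{Q})$ gives $X=QX_0Q^T$, again symmetric, rational, orthogonal, with the same trace. The entries of $X$ are rational functions (indeed polynomials) in the entries of $Q$, and the locus where some entry $X_{i,j}$ vanishes is a proper algebraic subvariety of $SO(n)$ (it is proper because one can exhibit at least one continuous choice of $Q$ making that entry nonzero). A finite union of proper subvarieties cannot cover $SO(n)$, so the set of $Q\in SO(n)$ with $\underline{X}=J_n$ is open and nonempty. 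By Proposition \ref{dense} the rational points $SO_n(\mathbb{Q})$ are dense, hence meet this open set, yielding a rational $Q$ with $\underline{QX_0Q^T}=J_n$.

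\medskip
\textbf{The main obstacle.}
The only delicate point is showing that each vanishing locus $\{X_{i,j}=0\}$ is a \emph{proper} subvariety, i.e.\ that it is genuinely possible to make every single entry nonzero simultaneously. The Grover matrix and its permuted conjugates show the full-support region is nonempty for $k=1$ and, by symmetry $X\mapsto -X$ swapping the roles of the two eigenspaces, for $k=n-1$; for intermediate $k$ one must verify nonemptiness directly, e.g.\ by computing the entries of $X=QX_0Q^T$ for a generic rotation $Q$ and checking that no entry is forced to vanish identically. Once nonemptiness of the full-support region is established, density of $SO_n(\mathbb{Q})$ closes the argument cleanly. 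I expect the algebraic-genericity step to be the substantive part and the rationality to follow formally.
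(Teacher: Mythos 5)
Your argument is correct in substance but takes a genuinely different route from the paper's. The paper proceeds by induction on $n$: the Grover matrix $I_n-\frac{2}{n}J_n$ settles $k=1$; for $k>1$ it forms $Z=Y\oplus\frac{1}{5}\bigl[\begin{smallmatrix}3&4\\4&-3\end{smallmatrix}\bigr]$, where $Y$ is the $(n-2)\times(n-2)$ solution with the same trace supplied by the induction hypothesis, and then conjugates $Z$ by a rational orthogonal $P$ (via Proposition \ref{dense}) to eliminate all zero entries. You dispense with the induction by conjugating the signature matrix $X_0=(-I_k)\oplus I_{n-k}$ directly, which is arguably cleaner: the trace is correct by construction and no recursion is needed. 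Both proofs rest on the same final step --- that a generic rational orthogonal conjugate has no zero entries --- and the paper simply asserts it, while you correctly isolate it as the substantive point. The gap you flag closes easily and uniformly in $k$: writing $X=I_n-2P_W$ with $W$ the span of the first $k$ columns of $Q$, one has $X_{i,j}=-2(P_W)_{i,j}$ for $i\neq j$, and choosing $W$ to contain $(e_i+e_j)/\sqrt{2}$ gives $(P_W)_{i,j}=\tfrac12\neq0$; on the diagonal, $X_{i,i}=1-2\lVert P_We_i\rVert^2$ equals $-1$ or $1$ according as $e_i\in W$ or $e_i\perp W$, both realizable for $1\le k\le n-1$. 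Hence each vanishing locus is a proper subvariety of the irreducible variety $SO(n)$ (equivalently, a closed nowhere-dense subset of the connected analytic manifold $SO(n)$), their finite union misses a nonempty Euclidean-open set, and density of $SO_n(\mathbb{Q})$ produces the required rational $Q$. With that one verification written out, your proof is complete and, if anything, more self-contained than the paper's.
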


\begin{proof}
Noe that the assertion is vacuous for $n=1$ and trivial for $n=2$. We proceed
by induction on $n\geq3$. We may assume that $t\geq0$. If $k=1$ the above
observation shows that the assertion is true. Let $k>1$. Then $t=n-2k\leq n-4$
implies that $n\geq4$. By induction hypothesis there exists a symmetric matrix
$Y\in O_{n-2}(\mathbb{Q})$ such that Tr$\left(  Y\right)  =t$ and
$\underline{Y}=J_{n-2}$. The matrix
\[
Z=Y\oplus\frac{1}{5}\left[
\begin{array}
[c]{rr}%
3 & 4\\
4 & -3
\end{array}
\right]  \in O_{n}(\mathbb{Q})
\]
is symmetric with Tr$\left(  Z\right)  =t$. By using Proposition \ref{dense},
we can choose $P\in O_{n}(\mathbb{Q})$ such that $X=PZP^{T}$ has no zero
entries, \emph{i.e.}, $\underline{X}=J_{n}$.
\end{proof}

\begin{proposition}
Let $X=X^{T}\in O_{n}(\mathbb{Q})$ be such that $X_{i,n}\neq0$ for $1\leq
i\leq n$. Then, $m>n>1$, there exists $Y=Y^{T}\in O_{m}(\mathbb{Q})$ such that
$X_{i,j}=0$ if and only if $Y_{i,j}=0$, for $1\leq i,j\leq n$, and
$Y_{i,j}\neq0$, for $i>n$. Moreover, $Y$ can be chosen so that Tr$\left(
Y\right)  =m-n+$Tr$\left(  X\right)  $.
\end{proposition}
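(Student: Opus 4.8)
The plan is to build $Y$ one coordinate at a time, enlarging the matrix by a single row and column at each step and inducting on $m$. The only tool is conjugation by a plane rotation, which preserves symmetry, the trace, and (after invoking Proposition~\ref{dense}) rationality.

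For the base case $m=n+1$ I would set $Z=X\oplus[1]$, so that $Z=Z^{T}\in O_{n+1}(\mathbb{Q})$ and $\mathrm{Tr}(Z)=\mathrm{Tr}(X)+1$. Let $R$ be the rotation through an angle $\theta$ acting on the plane spanned by $e_{n}$ and $e_{n+1}$, put $P=I_{n-1}\oplus R$, and form $Y=PZP^{T}$. Because $P$ fixes the first $n-1$ coordinates, the leading $(n-1)\times(n-1)$ block of $Y$ equals that of $X$, and a direct computation gives, for $i\leq n-1$, $Y_{i,n}=(\cos\theta)X_{i,n}$ and $Y_{i,n+1}=(\sin\theta)X_{i,n}$; these are nonzero precisely because the hypothesis forces $X_{i,n}\neq0$. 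The $2\times2$ block in coordinates $n,n+1$ is $R\,\mathrm{diag}(X_{n,n},1)\,R^{T}$, with off-diagonal entry $(X_{n,n}-1)\cos\theta\sin\theta$ and diagonal entries $X_{n,n}\cos^{2}\theta+\sin^{2}\theta$ and $X_{n,n}\sin^{2}\theta+\cos^{2}\theta$. Since $n>1$, the unit vector in column $n$ has at least two nonzero entries, hence $X_{n,n}\neq1$; together with $X_{n,n}\neq0$ this lets me pick $\theta$, avoiding the finitely many degenerate angles, so that all these entries are nonzero. The leading $n\times n$ zero-pattern is thereby reproduced (its last row and column and its $(n,n)$ entry are all nonzero by hypothesis), row and column $n+1$ become dense, and $\mathrm{Tr}(Y)=\mathrm{Tr}(Z)=(n+1)-n+\mathrm{Tr}(X)$.

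For the inductive step from $m-1$ to $m$, suppose $Y'=(Y')^{T}\in O_{m-1}(\mathbb{Q})$ already has the desired properties, with $\mathrm{Tr}(Y')=(m-1)-n+\mathrm{Tr}(X)$. I would repeat the construction with $Z'=Y'\oplus[1]$ and a rotation $R$ now acting on coordinates $m-1$ and $m$. The crucial point is that both rotated coordinates exceed $n$, so the leading $n\times n$ block is left completely untouched and its pattern is preserved verbatim. The role of the pivot column is now played by row/column $m-1$ of $Y'$, which is dense by induction; the same formulas make the two new rows and columns dense (using $Y'_{m-1,m-1}\neq1$, valid since $m-1>1$) while the already-dense rows stay dense. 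A rational angle meeting all conditions is available by Proposition~\ref{dense}, and the trace again rises by exactly $1$.

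The main obstacle is the bookkeeping that the leading $n\times n$ zero-pattern is reproduced \emph{exactly}, not merely in the entries we create: this is where the hypothesis that the relevant column has no zero entries is essential, since it is what forces the freshly mixed entries to be nonzero, and why the elementary analysis of the $2\times2$ rotated block is needed to keep the two new diagonal entries off zero. Choosing a single rational $\theta$ satisfying all the simultaneous nonvanishing conditions is harmless, because each condition excludes only finitely many angles and $SO_{2}(\mathbb{Q})$ is dense in $SO(2)$.
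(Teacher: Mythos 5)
Your proposal is correct and follows essentially the same route as the paper: reduce to $m=n+1$, conjugate $X\oplus[1]$ by a rational orthogonal transformation acting only on the last two coordinates, and verify the finitely many nonvanishing conditions (your conditions $\tan^{2}\theta\neq-X_{n,n}^{\pm1}$ are exactly the paper's $a^{2}/b^{2}\neq-X_{n,n}^{\pm1}$, with a rotation in place of the paper's symmetric block $\left[\begin{smallmatrix}a&b\\b&-a\end{smallmatrix}\right]$). You are in fact slightly more careful than the paper in noting explicitly that $X_{n,n}\neq1$ (needed for the new off-diagonal entry) follows from the hypothesis that column $n$ is a dense unit vector with $n>1$.
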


\begin{proof}
Without any loss of generality we may assume that $m=n+1$. Then we can take
$Y=P\left(  X\oplus\left[  1\right]  \right)  P^{T}$, where%
\[
P=I_{n-1}\oplus\left[
\begin{array}
[c]{rr}%
a & b\\
b & -a
\end{array}
\right]
\]
and $a,b\in\mathbb{Q}^{\ast}$ are chosen such that $a^{2}+b^{2}=1$ and
$a^{2}/b^{2}\neq-X_{n,n}^{\pm1}$.
\end{proof}

\bigskip

By using the fact that $\Delta_{4,2}$ supports a matrix $X=X^{T}\in
O_{4}(\mathbb{Q})$ such that Tr$\left(  X\right)  =0$, if follows from the
above proposition that $\Delta_{m,2}$, $m\geq4$, supports a matrix $Y=Y^{T}\in
O_{m}(\mathbb{Q})$ with Tr$\left(  Y\right)  =m-4$.

\subsection{Symmetric rational orthogonal matrices with zero-pattern
$J_{n}-I_{n}$}

If there exists a symmetric matrix $X\in O(n)$ with zero-pattern $J_{n}-I_{n}%
$, then $n$ must be even. Indeed since such $X$ is involutory, its trace is an
integer of the same parity as $n$.

A \emph{conference matrix} of order $n$ is an $n\times n$ matrix $C$ with zero
diagonal and all other entries in $\{\pm1\}$ and such that $CC^{T}=(n-1)I_{n}%
$. If a conference matrix of order $n>1$ exists, then $n$ must be even. It is
known that they exist for all even orders $n=2m\leq64$ except for $m=11,17$
and $29$ (when they do not exist). A conference matrix is \emph{normalized} if
all entries in the first row and column are equal to 1, except the $(1,1)$
entry which is $0$.

Let $C$ be a normalized conference matrix of order $n$. If $n\equiv2\pmod{4}$,
then $C$ is necessarily symmetric. On the other hand, if $n\equiv0\pmod{4}$,
then the submatrix of $C$ obtained by deleting the first row and column is
necessarily skew-symmetric. By a well known construction of Paley (see,
\emph{e.g.} \cite{gs}), we know that there exist conference matrices of order
$n=1+p^{k}$ for any odd prime $p$ and any positive integer $k$. From these
facts we deduce the following result.

\begin{proposition}
\label{conf}Let $C$ be a normalized conference matrix of order $n=1+m^{2}$,
where $m$ is an odd positive integer. Then $\frac{1}{m}C$ is a symmetric
rational orthogonal matrix with zero-pattern $J_{n}-I_{n}$. Such $C$ exists if
$m$ is an odd prime power.
\end{proposition}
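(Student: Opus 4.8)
The plan is to read off the four asserted properties directly from the definition of a conference matrix together with the defining identity $CC^{T}=(n-1)I_{n}$, and then to settle existence separately by invoking the Paley construction. Rationality and orthogonality are immediate: every entry of $C$ lies in $\{0,\pm1\}$, so $\frac{1}{m}C$ has rational entries, and substituting $n-1=m^{2}$ into the conference identity gives $\left(\frac{1}{m}C\right)\left(\frac{1}{m}C\right)^{T}=\frac{1}{m^{2}}CC^{T}=\frac{1}{m^{2}}\cdot m^{2}I_{n}=I_{n}$. The zero-pattern is also immediate: a conference matrix has zero diagonal and all off-diagonal entries in $\{\pm1\}$, hence nonzero, so $\underline{C}=\underline{\frac{1}{m}C}=J_{n}-I_{n}$.

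The one substantive point is symmetry, and this is exactly where the hypothesis that $m$ is odd is used. First I would note that an odd square satisfies $m^{2}\equiv1\pmod{4}$, so that $n=1+m^{2}\equiv2\pmod{4}$. The fact recalled immediately before the statement then applies: a normalized conference matrix whose order is $\equiv2\pmod{4}$ is necessarily symmetric. Thus $C$, and with it $\frac{1}{m}C$, is symmetric. In other words, the role of the hypothesis ``$m$ odd'' is precisely to force the residue class $n\equiv2\pmod{4}$ that guarantees the symmetric (rather than the $n\equiv0$, skew-type) behaviour.

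For the final existence claim, suppose $m=p^{k}$ is an odd prime power. Then $n=1+m^{2}=1+p^{2k}$ is one more than a prime power, so Paley's construction yields a conference matrix of order $n$. To obtain a \emph{normalized} such matrix I would first rescale the columns and then the rows by suitable signs $\pm1$: these operations leave the diagonal zero, keep all other entries in $\{\pm1\}$, and preserve the identity $CC^{T}=(n-1)I_{n}$ (left and right multiplication by diagonal sign matrices does), so the result is again a conference matrix, now with first row and first column equal to $1$ off the $(1,1)$ entry.

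There is no deep obstacle here; the content is entirely in recognizing that $m$ odd produces $n\equiv2\pmod{4}$. The only points requiring any care are the bookkeeping in the normalization step (checking that sign changes preserve the conference property and can be arranged to normalize both the first row and the first column simultaneously) and the correct reading of Paley's theorem for the order $1+p^{2k}$.
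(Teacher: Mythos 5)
Your proposal is correct and follows essentially the same route as the paper, which deduces the proposition directly from the two facts recalled just before it (a normalized conference matrix of order $\equiv 2 \pmod 4$ is symmetric, and Paley's construction supplies conference matrices of order $1+p^{k}$); your observation that $m$ odd forces $n=1+m^{2}\equiv 2\pmod 4$ is precisely the intended point, and your explicit normalization step is a harmless extra detail the paper leaves implicit.
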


\subsection{Symmetric rational orthogonal matrices from orthogonal designs}

An \emph{orthogonal design }(see, \emph{e.g.}, \cite{gse}) of order $n$ and
type $\left(  s_{1},s_{2},...,s_{u}\right)  $ for $s_{i}>0$, on the commuting
variables $x_{1},x_{2},...,x_{u}$, is an $n\times n$ matrix $M$ with entries
from $\{0,\pm x_{i}:i=1,2,...,u\}$ such that%
\[
MM^{T}=\left(
%TCIMACRO{\dsum \limits_{i=1}^{u}}%
%BeginExpansion
{\displaystyle\sum\limits_{i=1}^{u}}
%EndExpansion
s_{i}x_{i}^{2}\right)  I_{n}.
\]
Such design can be used to construct infinitely many rational orthogonal
matrices with the same zero-pattern. As an example, consider the following
orthogonal design:%

\[
X=\left[
\begin{array}
[c]{rrrrrrrr}%
x & y & z & 0 & a & 0 & 0 & -b\\
y & -x & 0 & -z & 0 & -a & b & 0\\
z & 0 & -x & y & 0 & -b & -a & 0\\
0 & -z & y & x & b & 0 & 0 & a\\
a & 0 & 0 & b & -x & y & z & 0\\
0 & -a & -b & 0 & y & x & 0 & -z\\
0 & b & -a & 0 & z & 0 & x & y\\
-b & 0 & 0 & a & 0 & -z & y & -x
\end{array}
\right]  ,
\]%
\[
XX^{T}=\left(  x^{2}+y^{2}+z^{2}+a^{2}+b^{2}\right)  I.
\]
If we set $x=y=z=1/4$, $a=1/2$, and $b=3/4$, we then obtain a symmetric matrix
in $O_{8}(\mathbb{Q})$, with the same zero-pattern as $X$.

\subsection{Indecomposable rational orthogonal matrices with maximal number of
zero entries}

We recall from \cite{bb} that the maximum number of the zero entries in an
indecomposable $n\times n$ unitary matrix, $n\geq2$, is $\left(  n-2\right)
^{2}$. Let us say that an indecomposable $n\times n$ zero-pattern is
\emph{maximal} if it has exactly $\left(  n-2\right)  ^{2}$ zero entries. In
the same paper it is shown that, for $n\geq5$, the maximal zero-patterns form
either a single equivalence class or two equivalence classes which are
transposes of each other. We shall see below that both possibilities occur. It
is also known (see \cite{ch}) that the number of zero entries in
indecomposable $n\times n$ unitary matrices can take any of the values
$0,1,2,...,\left(  n-2\right)  ^{2}$.

We shall use the \emph{special zigzag} matrices introduced in \cite{DZ}. These
are the matrices $X$ defined by means of two sequences $x_{0},x_{1}%
,x_{2},\ldots$ and $y_{1},y_{2},\ldots$ as follows:
\begin{equation}
X=\left[
\begin{tabular}
[c]{rrrrrrrr}%
$x_{0}x_{1}$ & $x_{0}y_{1}$ & \multicolumn{1}{|r}{$\mathbf{0}$} &
\multicolumn{1}{|r}{$\mathbf{0}$} & \multicolumn{1}{|r}{$\mathbf{0}$} &
\multicolumn{1}{|r}{$\mathbf{0}$} & \multicolumn{1}{|r}{$\mathbf{0}$} &
\multicolumn{1}{|r}{$\cdots$}\\
$-y_{1}x_{2}$ & $x_{1}x_{2}$ & \multicolumn{1}{|r}{$y_{2}x_{3}$} &
\multicolumn{1}{|r}{$y_{2}y_{3}$} & \multicolumn{1}{|r}{$\mathbf{0}$} &
\multicolumn{1}{|r}{$\mathbf{0}$} & \multicolumn{1}{|r}{$\mathbf{0}$} &
\multicolumn{1}{|r}{}\\\cline{1-2}%
$y_{1}y_{2}$ & $-x_{1}y_{2}$ & $x_{2}x_{3}$ & \multicolumn{1}{|r}{$x_{2}y_{3}%
$} & \multicolumn{1}{|r}{$\mathbf{0}$} & \multicolumn{1}{|r}{$\mathbf{0}$} &
\multicolumn{1}{|r}{$\mathbf{0}$} & \multicolumn{1}{|r}{}\\\cline{1-3}%
$\mathbf{0}$ & $\mathbf{0}$ & $-y_{3}x_{4}$ & $x_{3}x_{4}$ &
\multicolumn{1}{|r}{$y_{4}x_{5}$} & \multicolumn{1}{|r}{$y_{4}y_{5}$} &
\multicolumn{1}{|r}{$\mathbf{0}$} & \multicolumn{1}{|r}{}\\\cline{1-4}%
$\mathbf{0}$ & $\mathbf{0}$ & $y_{3}y_{4}$ & $-x_{3}y_{4}$ & $x_{4}x_{5}$ &
\multicolumn{1}{|r}{$x_{4}y_{5}$} & \multicolumn{1}{|r}{$\mathbf{0}$} &
\multicolumn{1}{|r}{}\\\cline{1-5}%
$\mathbf{0}$ & $\mathbf{0}$ & $\mathbf{0}$ & $\mathbf{0}$ & $-y_{5}x_{6}$ &
$x_{5}x_{6}$ & \multicolumn{1}{|r}{$y_{6}x_{7}$} & \multicolumn{1}{|r}{}%
\\\cline{1-6}%
$\mathbf{0}$ & $\mathbf{0}$ & $\mathbf{0}$ & $\mathbf{0}$ & $y_{5}y_{6}$ &
$-x_{5}y_{6}$ & $x_{6}x_{7}$ & \multicolumn{1}{|r}{}\\\cline{1-7}%
$\vdots$ &  &  &  &  &  &  & $\ddots$%
\end{tabular}
\right]  . \label{szig}%
\end{equation}

If the above sequences are infinite, $X$ will be an infinite matrix and we
shall denote it by $X_{\infty}$. If $X$ is of size $n$ then we shall denote it
by $X_{n}$. Thus $X_{n}$ is defined by two finite sequences: $x_{0}%
,x_{1},\ldots,x_{n}$ and $y_{1},y_{2},\ldots,y_{n-1}$. Note that $X_{n}$ is
just the $n\times n$ submatrix lying in the left upper corner of $X_{\infty}$.
If $x_{k}^{2}+y_{k}^{2}=1$ for $1\leq k\leq n-1$ and $x_{0},x_{n}\in\{\pm1\}$,
then $X_{n}\in O(n)$.

\begin{proposition}
If $\underline{M}$ is a maximal $n\times n$ zero-pattern then there exists
$X\in O_{n}(\mathbb{Q})$ such that $\underline{X}=\underline{M}$.
\end{proposition}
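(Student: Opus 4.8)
The plan is to explicitly classify the maximal zero-patterns and then exhibit a rational orthogonal matrix supporting each one, using the special zigzag matrices from \eqref{szig} as the main building block. As quoted from \cite{bb}, for $n\geq5$ the maximal zero-patterns fall into either a single equivalence class or a transpose-pair of equivalence classes; the small cases $n=2,3,4$ are handled separately (for $n=2,3$ the condition $(n-2)^2 \leq 1$ zero entries is mild and can be checked directly against the list in Section \ref{sec2}, while $n=4$ needs at most $4$ zeros and is covered by the matrices already displayed there). So the heart of the argument is the generic $n\geq5$ case, where I would first pin down a canonical representative of the maximal zero-pattern.

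First I would show that the special zigzag matrix $X_n$ realizes a maximal zero-pattern. Counting the $\mathbf{0}$ entries forced by the band-plus-overhang structure in \eqref{szig}, one verifies that $X_n$ has exactly $(n-2)^2$ zero entries whenever the parameters $x_k,y_k$ are all nonzero. Since \cite{bb} guarantees maximal patterns are essentially unique up to equivalence and transposition, it then suffices to check that $\underline{X_n}$ is equivalent (or transpose-equivalent) to the canonical maximal pattern; I would do this by directly comparing supports, possibly after a fixed permutation of rows and columns. The transpose case is automatic, since $X_n \in O(n)$ forces $X_n^T \in O(n)$ with the transposed support, and rationality is preserved under transposition.

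Next comes the rationality step, which is the crux. The constraints for $X_n\in O(n)$ are $x_0,x_n\in\{\pm1\}$ and $x_k^2+y_k^2=1$ for $1\leq k\leq n-1$. To land in $O_n(\mathbb{Q})$ I would choose each pair $(x_k,y_k)$ to be a rational point on the unit circle with both coordinates nonzero --- for instance $(x_k,y_k)=(3/5,4/5)$ --- and set $x_0=x_n=1$. Rational points on the circle are dense and plentiful (cf.\ Proposition \ref{dense}), so nonvanishing of every $x_k,y_k$, and hence of every displayed product $x_iy_j$, $x_ix_{i+1}$, etc., is easily arranged. This makes every intended nonzero entry genuinely nonzero while keeping all entries rational, so $\underline{X_n}=\underline{M}$ exactly.

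The main obstacle I anticipate is not the rationality but the bookkeeping: verifying that the zigzag support matches the maximal pattern of \cite{bb} up to equivalence, and confirming the exact zero-count of $(n-2)^2$ from the overhang pattern in \eqref{szig} for general $n$. A secondary subtlety is ensuring that \emph{no unintended cancellations} occur --- that is, that distinct nonzero products never sit in positions that the pattern requires to be zero, and that no intended-nonzero entry accidentally vanishes for the chosen rational parameters; both are controlled by keeping all $x_k,y_k$ nonzero. Once the pattern identification is settled, the construction transfers to the transpose class for free, completing all maximal zero-patterns for $n\geq5$, and the finitely many small $n$ are dispatched by inspection of the explicit lists in Section \ref{sec2}.
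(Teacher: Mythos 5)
Your proposal follows essentially the same route as the paper: instantiate the special zigzag matrix $X_n$ with rational points $(x_k,y_k)$ on the unit circle having $x_ky_k\neq 0$ and $x_0=x_n=1$, so that $X_n\in O_n(\mathbb{Q})$ with a maximal zero-pattern, and then invoke the Beasley--Brualdi--Shader classification to conclude that any maximal $\underline{M}$ is equivalent to $\underline{X_n}$ or $\underline{X_n}^{T}$, the latter being handled by transposing. The extra bookkeeping you flag (the exact zero count $(n-2)^2$ and the small cases $n\leq 4$ via the explicit lists) is consistent with, and slightly more careful than, the paper's three-line argument.
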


\begin{proof}
In the above matrix $X_{n}$, we can chose the rational values for $x_{k}$ and
$y_{k}$ such that $x_{k}^{2}+y_{k}^{2}=1$ and $x_{k}y_{k}\neq0$, for $1\leq
k\leq n-1$, and set $x_{0}=x_{n}=1$. Then $X_{n}\in O_{n}(\mathbb{Q})$ and it
has the desired zero-pattern. It remains to observe that $\underline{X_{n}}$
must be equivalent to $\underline{M}$ or $\underline{M}^{T}$ by a result of
\cite{bb}.
\end{proof}

\bigskip

Let $Y_{n}$ be the matrix obtained from $X_{n}$ by reversing the order of its
rows. Let us denote its zero-pattern by $\Lambda_{n}$. This is an example of a
maximal zero-pattern (see \cite{bb}).

We set $x_{0}=x_{n}=1$ and impose the conditions $x_{k}=x_{n-k}$,
$y_{k}=y_{n-k}$ and $x_{k}^{2}+y_{k}^{2}=1$ for $1\leq k<n$. We can choose
such $x_{k}$ and $y_{k}$ to be rational and nonzero. Hence, in that case
maximal zero-patterns form just one equivalence class. If $n$ is odd, then
$\Lambda_{n}$ and $Y_{n}$ are symmetric matrices. On the other hand, if $n$ is
even then $\Lambda_{n}$ is not symmetric. In fact, in that case the
equivalence class of $\Lambda_{n}$ contains no symmetric patterns. This
follows by comparing the row sums and column sums of $\Lambda_{n}$. For $n=6$,
we have verified that the maximal zero-patterns form two equivalence classes.

Now let $n=2m$ be even. Denote by $\Lambda_{n}^{\#}$ the $n\times n$ symmetric
zero-pattern in the following infinite sequence:
\[%
\begin{tabular}
[c]{lll}%
$\Lambda_{4}^{\#}=\left[
\begin{tabular}
[c]{rrrr}%
$1$ & $1$ & $1$ & $1$\\
$1$ & $1$ & $1$ & $1$\\\cline{4-4}%
$1$ & $1$ & $1$ & \multicolumn{1}{|r}{$\mathbf{0}$}\\\cline{3-3}%
$1$ & $1$ & \multicolumn{1}{|r}{$\mathbf{0}$} & $\mathbf{0}$%
\end{tabular}
\ \ \ \ \ \ \right]  ,$ &  & $\Lambda_{6}^{\#}=\left[
\begin{tabular}
[c]{rrrrrr}%
$\mathbf{0}$ & $\mathbf{0}$ & $\mathbf{0}$ & $\mathbf{0}$ &
\multicolumn{1}{|r}{$1$} & $1$\\\cline{2-4}%
$\mathbf{0}$ & \multicolumn{1}{|r}{$1$} & $1$ & $1$ & $1$ & $1$\\
$\mathbf{0}$ & \multicolumn{1}{|r}{$1$} & $1$ & $1$ & $1$ & $1$\\\cline{5-6}%
$\mathbf{0}$ & \multicolumn{1}{|r}{$1$} & $1$ & $1$ &
\multicolumn{1}{|r}{$\mathbf{0}$} & $\mathbf{0}$\\\cline{1-1}\cline{4-4}%
$1$ & $1$ & $1$ & \multicolumn{1}{|r}{$\mathbf{0}$} & $\mathbf{0}$ &
$\mathbf{0}$\\
$1$ & $1$ & $1$ & \multicolumn{1}{|r}{$\mathbf{0}$} & $\mathbf{0}$ &
$\mathbf{0}$%
\end{tabular}
\ \ \ \ \right]  ,$%
\end{tabular}
\ \ \ \
\]%
\[%
\begin{tabular}
[c]{l}%
$\Lambda_{8}^{\#}=\left[
\begin{tabular}
[c]{rrrrrrrr}%
$\mathbf{0}$ & $\mathbf{0}$ & $\mathbf{0}$ & $\mathbf{0}$ & $\mathbf{0}$ &
\multicolumn{1}{|r}{$1$} & $1$ & $1$\\
$\mathbf{0}$ & $\mathbf{0}$ & $\mathbf{0}$ & $\mathbf{0}$ & $\mathbf{0}$ &
\multicolumn{1}{|r}{$1$} & $1$ & $1$\\\cline{3-5}\cline{8-8}%
$\mathbf{0}$ & $\mathbf{0}$ & \multicolumn{1}{|r}{$1$} & $1$ & $1$ & $1$ & $1$
& \multicolumn{1}{|r}{$\mathbf{0}$}\\
$\mathbf{0}$ & $\mathbf{0}$ & \multicolumn{1}{|r}{$1$} & $1$ & $1$ & $1$ & $1$
& \multicolumn{1}{|r}{$\mathbf{0}$}\\\cline{6-7}%
$\mathbf{0}$ & $\mathbf{0}$ & \multicolumn{1}{|r}{$1$} & $1$ & $1$ &
\multicolumn{1}{|r}{$\mathbf{0}$} & $\mathbf{0}$ & $\mathbf{0}$\\\cline{1-2}%
\cline{5-5}%
$1$ & $1$ & $1$ & $1$ & \multicolumn{1}{|r}{$\mathbf{0}$} & $\mathbf{0}$ &
$\mathbf{0}$ & $\mathbf{0}$\\
$1$ & $1$ & $1$ & $1$ & \multicolumn{1}{|r}{$\mathbf{0}$} & $\mathbf{0}$ &
$\mathbf{0}$ & $\mathbf{0}$\\\cline{3-4}%
$1$ & $1$ & \multicolumn{1}{|r}{$\mathbf{0}$} & $\mathbf{0}$ & $\mathbf{0}$ &
$\mathbf{0}$ & $\mathbf{0}$ & $\mathbf{0}$%
\end{tabular}
\ \ \ \ \right]  ,$%
\end{tabular}
\ \ \ \
\]%
\[%
\begin{tabular}
[c]{l}%
$\Lambda_{10}^{\#}=\left[
\begin{tabular}
[c]{rrrrrrrrrr}%
$\mathbf{0}$ & $\mathbf{0}$ & $\mathbf{0}$ & $\mathbf{0}$ & $\mathbf{0}$ &
$\mathbf{0}$ & $\mathbf{0}$ & $\mathbf{0}$ & \multicolumn{1}{|r}{$1$} &
$1$\\\cline{7-8}%
$\mathbf{0}$ & $\mathbf{0}$ & $\mathbf{0}$ & $\mathbf{0}$ & $\mathbf{0}$ &
$\mathbf{0}$ & \multicolumn{1}{|r}{$1$} & $1$ & $1$ & $1$\\
$\mathbf{0}$ & $\mathbf{0}$ & $\mathbf{0}$ & $\mathbf{0}$ & $\mathbf{0}$ &
$\mathbf{0}$ & \multicolumn{1}{|r}{$1$} & $1$ & $1$ & $1$\\\cline{4-6}%
\cline{9-10}%
$\mathbf{0}$ & $\mathbf{0}$ & $\mathbf{0}$ & \multicolumn{1}{|r}{$1$} & $1$ &
$1$ & $1$ & $1$ & \multicolumn{1}{|r}{$\mathbf{0}$} & $\mathbf{0}$\\
$\mathbf{0}$ & $\mathbf{0}$ & $\mathbf{0}$ & \multicolumn{1}{|r}{$1$} & $1$ &
$1$ & $1$ & $1$ & \multicolumn{1}{|r}{$\mathbf{0}$} & $\mathbf{0}%
$\\\cline{7-8}%
$\mathbf{0}$ & $\mathbf{0}$ & $\mathbf{0}$ & \multicolumn{1}{|r}{$1$} & $1$ &
$1$ & \multicolumn{1}{|r}{$\mathbf{0}$} & $\mathbf{0}$ & $\mathbf{0}$ &
$\mathbf{0}$\\\cline{2-3}\cline{6-6}%
$\mathbf{0}$ & \multicolumn{1}{|r}{$1$} & $1$ & $1$ & $1$ &
\multicolumn{1}{|r}{$\mathbf{0}$} & $\mathbf{0}$ & $\mathbf{0}$ & $\mathbf{0}$
& $\mathbf{0}$\\
$\mathbf{0}$ & \multicolumn{1}{|r}{$1$} & $1$ & $1$ & $1$ &
\multicolumn{1}{|r}{$\mathbf{0}$} & $\mathbf{0}$ & $\mathbf{0}$ & $\mathbf{0}$
& $\mathbf{0}$\\\cline{1-1}\cline{4-5}%
$1$ & $1$ & $1$ & \multicolumn{1}{|r}{$\mathbf{0}$} & $\mathbf{0}$ &
$\mathbf{0}$ & $\mathbf{0}$ & $\mathbf{0}$ & $\mathbf{0}$ & $\mathbf{0}$\\
$1$ & $1$ & $1$ & \multicolumn{1}{|r}{$\mathbf{0}$} & $\mathbf{0}$ &
$\mathbf{0}$ & $\mathbf{0}$ & $\mathbf{0}$ & $\mathbf{0}$ & $\mathbf{0}$%
\end{tabular}
\ \ \ \right]  ,$%
\end{tabular}
\ldots
\]
Note that $\Lambda_{n}^{\#}$ has exactly $4n-3$ ones.

\begin{theorem}
For odd (resp. even) $n>2$ there exist symmetric rational orthogonal matrices
with zero-pattern $\Lambda_{n}$ (resp. $\Lambda_{n}^{\#}$).
\end{theorem}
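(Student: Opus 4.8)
The plan is to treat the odd and even cases by separate constructions, in each case producing a parametrized family of orthogonal matrices and then specializing the parameters to rational nonzero values using the density of rational points on the unit circle (Proposition \ref{dense}).

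For odd $n>2$ almost no new work is needed: I would invoke the row-reversed zigzag $Y_n$ together with the symmetric specialization already set up in the text, namely $x_0=x_n=1$ and $x_k=x_{n-k}$, $y_k=y_{n-k}$, $x_k^2+y_k^2=1$ for $1\le k<n$. Since $n$ is odd, the index $k$ is never equal to $n-k$, so these pairing conditions are consistent; as recorded above, $Y_n$ is then symmetric with $\underline{Y_n}=\Lambda_n$. Because $Y_n\in O(n)$ whenever $x_k^2+y_k^2=1$ and $x_0,x_n\in\{\pm1\}$, and because one may pick each $(x_k,y_k)$ to be a rational point of the unit circle with $x_ky_k\neq0$, all entries of $Y_n$ that should be nonzero are indeed nonzero. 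This yields the required symmetric matrix in $O_n(\mathbb{Q})$ with zero-pattern $\Lambda_n$.

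For even $n=2m$ the maximal patterns $\Lambda_n$ cannot be symmetric (their row and column sums disagree), so I would instead build a symmetric \emph{doubled zigzag} $M_n$ with support exactly $\Lambda_n^{\#}$. The idea is to imitate the scalar zigzag but to thicken each step of the staircase into a $2\times2$ step and to place the entries symmetrically about the main diagonal, writing each nonzero entry as a product of rotation parameters $x_k,y_k$, one independent rational point of the unit circle per step. The key phenomenon to exploit is that the orthogonality condition $M_nM_n^{T}=M_n^2=I$ \emph{decouples}: in the base case $n=4$ one checks that imposing two circle relations $d^2+g^2=1$ and $\nu^2+h^2=1$ on the parametrization
\[
M_4=\begin{pmatrix} -g^2h & ghd & -\nu g & d\\ ghd & -hd^2 & \nu d & g\\ -\nu g & \nu d & h & \mathbf{0}\\ d & g & \mathbf{0} & \mathbf{0}\end{pmatrix}
\]
forces every remaining norm and inner-product condition to hold automatically, and all staircase entries are nonzero once $d,g,\nu,h\neq0$. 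I expect the general staircase to behave the same way: each unit-norm condition collapses to a single equation $x_k^2+y_k^2=1$, while the off-diagonal orthogonality relations cancel termwise, so that rational nonzero choices of the parameters give $M_n\in O_n(\mathbb{Q})$ with $\underline{M_n}=\Lambda_n^{\#}$.

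The main obstacle is the explicit bookkeeping for the even case: one must fix the entries of $M_n$ so that the support is exactly $\Lambda_n^{\#}$ (in particular the three forced nonzero diagonal entries, and the single extra nonzero entry that makes $\Lambda_n^{\#}$ one short of maximal) and so that the orthogonality relations decouple as above. I expect the delicate point to be the \emph{seam} where the staircase changes direction, which is precisely the overlap region producing those three diagonal $1$'s; getting the interlocking of the rotation blocks right there is what dictated the specific local form in the $n=4$ computation, and the general construction must thread the parameters across this seam so that both symmetry and the termwise cancellation survive. Once the parametrization is pinned down, verifying $M_n^2=I$ is a routine (if lengthy) cancellation, and the density of rational Pythagorean points completes the proof.
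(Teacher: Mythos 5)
Your strategy is the same as the paper's: for odd $n$ you invoke the symmetrized row-reversed zigzag exactly as the text does, and that part is complete. For even $n$ your base case is also correct --- in fact your $M_4$ is literally the paper's $Z_4$ under the substitution $(d,g,\nu,h)=(y_1,-x_1,y_0,-x_0)$, and your check that the two circle relations force $M_4^2=I$ is right.

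The genuine gap is that for even $n\ge 6$ you never actually define $M_n$. ``Thicken each step of the staircase into a $2\times 2$ step'' does not determine the matrix, and the seam --- which you correctly identify as the delicate point --- is exactly where a definition is needed rather than an expectation. The paper resolves this by exhibiting a single infinite symmetric doubled zigzag $Z$: an anti-diagonally placed $3\times 3$ core
\[
\left[
\begin{array}{ccc}
x_0x_1^2 & x_0x_1y_1 & y_0x_1\\
x_0x_1y_1 & x_0y_1^2 & y_0y_1\\
y_0x_1 & y_0y_1 & -x_0
\end{array}
\right]
\]
supplies the three nonzero diagonal entries of $\Lambda_n^{\#}$, and $2\times 2$ blocks built from $(x_k,y_k)$, $k\ge 2$, are attached alternately above-right and below-left of it; $Z_n$ for $n=2m$ is the appropriate $n\times n$ window of $Z$, with the outermost blocks degenerating to single rows and columns (compare $Z_4$, $Z_6$, $Z_8$ in the text). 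Every off-diagonal inner product of rows then reduces, via $x_k^2+y_k^2=1$, to a telescoping sum that vanishes, just as in your $n=4$ computation. To close your argument you must either write down this general window or give the induction $Z_n\mapsto Z_{n+2}$ obtained by bordering with one new $2\times 2$ block at each end and checking the finitely many new relations. To be fair, the paper itself only displays $n=4,6,8$ together with the infinite pattern and declares the continuation obvious, so what you are missing is bookkeeping rather than an idea --- but as written your proof establishes the even case only for $n=4$.
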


\begin{proof}
We have already taken care of the odd case. In the even case, we shall
construct the required matrices $Z_{n}$ for $n=4,6$ and $8$ only. It will be
obvious how to proceed for bigger values of $n$. For $(x_{k},y_{k})$, $k\geq
0$, we can choose any rational point on the unit circle $x^{2}+y^{2}=1$ such
that $x_{k}y_{k}\neq0$. For $n=4,6$ we take the matrices in the forms
\[%
\begin{tabular}
[c]{l}%
$Z_{4}=\left[
\begin{array}
[c]{rrrr}%
x_{0}x_{1}^{2} & x_{0}x_{1}y_{1} & y_{0}x_{1} & y_{1}\\
x_{0}x_{1}y_{1} & x_{0}y_{1}^{2} & y_{0}y_{1} & -x_{1}\\
y_{0}x_{1} & y_{0}y_{1} & -x_{0} & \mathbf{0}\\
y_{1} & -x_{1} & \mathbf{0} & \mathbf{0}%
\end{array}
\right]  ,$%
\end{tabular}
\]%
\[%
\begin{tabular}
[c]{l}%
$Z_{6}=\left[
\begin{array}
[c]{rrrrrr}%
\mathbf{0} & \mathbf{0} & \mathbf{0} & \mathbf{0} & y_{2} & x_{2}\\
\mathbf{0} & x_{0}x_{1}^{2} & x_{0}x_{1}y_{1} & y_{0}x_{1} & y_{1}x_{2} &
-y_{1}y_{2}\\
\mathbf{0} & x_{0}x_{1}y_{1} & x_{0}y_{1}^{2} & y_{0}y_{1} & -x_{1}x_{2} &
x_{1}y_{2}\\
\mathbf{0} & y_{0}x_{1} & y_{0}y_{1} & -x_{0} & \mathbf{0} & \mathbf{0}\\
y_{2} & y_{1}x_{2} & -x_{1}x_{2} & \mathbf{0} & \mathbf{0} & \mathbf{0}\\
x_{2} & -y_{1}y_{2} & x_{1}y_{2} & \mathbf{0} & \mathbf{0} & \mathbf{0}%
\end{array}
\right]  .$%
\end{tabular}
\]
For $n=8$ we take
\[
Z_{8}=\left[
\begin{array}
[c]{rrrrrrrr}%
\mathbf{0} & \mathbf{0} & \mathbf{0} & \mathbf{0} & \mathbf{0} & y_{2}x_{3} &
x_{2}x_{3} & y_{3}\\
\mathbf{0} & \mathbf{0} & \mathbf{0} & \mathbf{0} & \mathbf{0} & y_{2}y_{3} &
x_{2}y_{3} & -x_{3}\\
\mathbf{0} & \mathbf{0} & x_{0}x_{1}^{2} & x_{0}x_{1}y_{1} & y_{0}x_{1} &
y_{1}x_{2} & -y_{1}y_{2} & \mathbf{0}\\
\mathbf{0} & \mathbf{0} & x_{0}x_{1}y_{1} & x_{0}y_{1}^{2} & y_{0}y_{1} &
-x_{1}x_{2} & x_{1}y_{2} & \mathbf{0}\\
\mathbf{0} & \mathbf{0} & y_{0}x_{1} & y_{0}y_{1} & -x_{0} & \mathbf{0} &
\mathbf{0} & \mathbf{0}\\
y_{2}x_{3} & y_{2}y_{3} & y_{1}x_{2} & -x_{1}x_{2} & \mathbf{0} & \mathbf{0} &
\mathbf{0} & \mathbf{0}\\
x_{2}x_{3} & x_{2}y_{3} & -y_{1}y_{2} & x_{1}y_{2} & \mathbf{0} & \mathbf{0} &
\mathbf{0} & \mathbf{0}\\
y_{3} & -x_{3} & \mathbf{0} & \mathbf{0} & \mathbf{0} & \mathbf{0} &
\mathbf{0} & \mathbf{0}%
\end{array}
\right]  .
\]
In each of these cases $Z_{n}$ is orthogonal and symmetric, and so involutory
matrix, with zero-pattern $\Lambda_{n}^{\#}$. In general, the construction can
be best understood by considering the infinite matrix%
%TCIMACRO{\TeXButton{\begingroup}{\begingroup}}%
%BeginExpansion
\begingroup
%EndExpansion%
%TCIMACRO{\TeXButton{\scalefont{x}}{\scalefont{0.7}}}%
%BeginExpansion
\scalefont{0.7}%
%EndExpansion%
\[
Z=\left[
\begin{tabular}
[c]{cccccccccccc}
&  &  &  &  &  &  &  &  &  &  & $\cdots$\\
&  &  &  &  &  &  &  &  & $y_{4}y_{5}$ & $x_{4}y_{5}$ & \\\cline{3-10}
&  & \multicolumn{1}{|c}{} &  &  &  &  & $y_{2}x_{3}$ & $x_{2}x_{3}$ &
$y_{3}x_{4}$ & \multicolumn{1}{|c}{$-y_{3}y_{4}$} & \\\cline{4-9}
&  & \multicolumn{1}{|c}{} & \multicolumn{1}{|c}{} &  &  &  & $y_{2}x_{3}$ &
$x_{2}y_{3}$ & \multicolumn{1}{|c}{$-x_{3}x_{4}$} & \multicolumn{1}{|c}{$x_{3}%
y_{4}$} & \\\cline{5-8}
&  & \multicolumn{1}{|c}{} & \multicolumn{1}{|c}{} &
\multicolumn{1}{|c}{$x_{0}x_{1}^{2}$} & $x_{0}x_{1}y_{1}$ & $y_{0}x_{1}$ &
$y_{1}x_{2}$ & \multicolumn{1}{|c}{$-y_{1}y_{2}$} & \multicolumn{1}{|c}{} &
\multicolumn{1}{|c}{} & \\
&  & \multicolumn{1}{|c}{} & \multicolumn{1}{|c}{} &
\multicolumn{1}{|c}{$x_{0}x_{1}y_{1}$} & $x_{0}y_{1}^{2}$ & $y_{0}y_{1}$ &
$-x_{1}x_{2}$ & \multicolumn{1}{|c}{$x_{1}y_{2}$} & \multicolumn{1}{|c}{} &
\multicolumn{1}{|c}{} & \\
&  & \multicolumn{1}{|c}{} & \multicolumn{1}{|c}{} &
\multicolumn{1}{|c}{$y_{0}x_{1}$} & $y_{0}y_{1}$ & $-x_{0}$ &  &
\multicolumn{1}{|c}{} & \multicolumn{1}{|c}{} & \multicolumn{1}{|c}{} & \\
&  & \multicolumn{1}{|c}{$y_{2}x_{3}$} & \multicolumn{1}{|c}{$y_{2}y_{3}$} &
\multicolumn{1}{|c}{$y_{1}x_{2}$} & $-x_{1}x_{2}$ &  &  &
\multicolumn{1}{|c}{} & \multicolumn{1}{|c}{} & \multicolumn{1}{|c}{} &
\\\cline{5-8}
&  & \multicolumn{1}{|c}{$x_{2}x_{3}$} & \multicolumn{1}{|c}{$x_{2}y_{3}$} &
$-y_{1}y_{2}$ & $x_{1}y_{2}$ &  &  &  & \multicolumn{1}{|c}{} &
\multicolumn{1}{|c}{} & \\\cline{4-9}
& $y_{4}y_{5}$ & \multicolumn{1}{|c}{$y_{3}y_{4}$} & $-x_{3}x_{4}$ &  &  &  &
&  &  & \multicolumn{1}{|c}{} & \\\cline{3-10}
& $x_{4}y_{5}$ & $-y_{3}y_{4}$ & $x_{3}y_{4}$ &  &  &  &  &  &  &  & \\
$\cdots$ &  &  &  &  &  &  &  &  &  &  &
\end{tabular}
\ \ \ \right]  .
\]%
%TCIMACRO{\TeXButton{\endgroup}{\endgroup}}%
%BeginExpansion
\endgroup
%EndExpansion

\end{proof}

\subsection{Symmetric rational orthogonal matrices from hypercubes}

The \emph{Hamming distance} between two words $v,w\in\left\{  0,1\right\}
^{n}$, denoted by $d\left(  v,w\right)  $, is the number of coordinates in
which the words differ: $d\left(  v,w\right)  :=\sum_{i=1}^{n}\left\vert
v_{i}-w_{i}\right\vert $. The $n$\emph{-dimensional hypercube}, denoted by
$Q_{n}$, is the graph defined as follows: $V(Q_{n})=\{0,1\}^{n}$; $\{v,w\}\in
E(Q_{n})$ if and only $d(v,w)=1$. The adjacency matrix of $Q_{n}$ can be
constructed recursively:
\[%
\begin{tabular}
[c]{lll}%
$M(Q_{1})=\left(
\begin{array}
[c]{cc}%
0 & 1\\
1 & 0
\end{array}
\right)  ,$ & $M(Q_{n})=\left(
\begin{array}
[c]{cc}%
M(Q_{n-1}) & I\\
I & M(Q_{n-1})
\end{array}
\right)  ,$ & for $n\geq2.$%
\end{tabular}
\ \
\]
It is simple to verify that the following matrices are real orthogonal:
\[%
\begin{tabular}
[c]{lll}%
$M_{1}=\left[
\begin{array}
[c]{cc}%
0 & -1\\
1 & 0
\end{array}
\right]  ,$ & $M_{n}=\frac{1}{\sqrt{n}}\left[
\begin{array}
[c]{cc}%
M_{n-1} & -I\\
I & M_{n-1}^{-1}%
\end{array}
\right]  ,$ & for $n\geq2$.
\end{tabular}
\ \
\]
Clearly, $\underline{M_{n}}=M(Q_{n})$.

\begin{proposition}
For each $n\geq2$, there exist many infinitely symmetric matrices $Y_{n}\in
O_{2^{n}}(\mathbb{Q})$ such that $\underline{Y_{n}}=M(Q_{n})$.
\end{proposition}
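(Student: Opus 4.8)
The plan is to induct on $n$, exploiting the recursive block description $M(Q_n)=\begin{pmatrix} M(Q_{n-1}) & I\\ I & M(Q_{n-1})\end{pmatrix}$. Throughout I maintain the stronger inductive hypothesis that $Y_{n-1}$ is a \emph{symmetric} rational orthogonal matrix with $\underline{Y_{n-1}}=M(Q_{n-1})$. Such a matrix automatically satisfies $Y_{n-1}^2=Y_{n-1}Y_{n-1}^T=I$, i.e.\ it is involutory, and this is precisely the property that drives the inductive step. The recursion is anchored at $n=1$ by $Y_1=\begin{pmatrix} 0 & 1\\ 1 & 0\end{pmatrix}$, which is symmetric, rational, orthogonal, and has zero-pattern $M(Q_1)$.

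For the inductive step, given $Y_{n-1}$ as above I would pick any rational point $(c,\delta)$ on the unit circle $c^2+\delta^2=1$ with $c\delta\neq0$ and set
\[
Y_n=\begin{pmatrix} c\,Y_{n-1} & \delta I\\ \delta I & -c\,Y_{n-1}\end{pmatrix}.
\]
This $Y_n$ is manifestly symmetric and rational. Writing $Y_n=\begin{pmatrix} A & D\\ D & B\end{pmatrix}$, orthogonality amounts to $A^2+D^2=I$, $B^2+D^2=I$, and $AD+DB=0$; here each diagonal block of $Y_n^2$ equals $c^2Y_{n-1}^2+\delta^2 I=(c^2+\delta^2)I=I$ (using $Y_{n-1}^2=I$), while the off-diagonal blocks are $c\delta\,Y_{n-1}-c\delta\,Y_{n-1}=0$, so $Y_n^2=I$ and $Y_n$ is orthogonal. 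Finally, since $c\neq0$ the diagonal blocks $\pm c\,Y_{n-1}$ have zero-pattern $M(Q_{n-1})$, and since $\delta\neq0$ the off-diagonal blocks $\delta I$ have zero-pattern $I$; hence $\underline{Y_n}=M(Q_n)$, completing the induction.

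To obtain \emph{infinitely many} such matrices for a fixed $n\geq2$, I would vary $(c,\delta)$ over the infinitely many rational points on the unit circle with $c\delta\neq0$, for instance via $(c,\delta)=\bigl(\tfrac{1-t^2}{1+t^2},\tfrac{2t}{1+t^2}\bigr)$ with $t\in\mathbb{Q}\setminus\{0,\pm1\}$; distinct values of $\delta$ already give distinct off-diagonal blocks and hence distinct $Y_n$. The only step requiring the right idea --- and thus the main (small) obstacle --- is the shape of the cross block: to satisfy $AD+DB=0$ with a scalar $D=\delta I$ one is forced into the sign flip $B=-A=-c\,Y_{n-1}$, and it is exactly the involutivity $Y_{n-1}^2=I$ that then collapses both diagonal blocks to $I$. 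Everything else is a routine block computation; as a bonus this reproves the existence implicit in $\underline{M_n}=M(Q_n)$ while upgrading the witness to a symmetric rational one.
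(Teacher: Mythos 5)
Your proof is correct and uses essentially the same construction as the paper: the block recursion $\bigl(\begin{smallmatrix} A & xI\\ xI & -A\end{smallmatrix}\bigr)$ with the sign flip, verified via involutivity. The only cosmetic difference is that you normalize at each step with $c^2+\delta^2=1$, whereas the paper carries indeterminates $x_1,\dots,x_n$ through the whole recursion and imposes $\sum\alpha_k^2=1$ once at the end; the two parametrizations produce the same family of matrices.
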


\begin{proof}
Let $x_{1},x_{2},...$ be indeterminates. We define the matrices $X_{n}$
recursively by%
\[%
\begin{tabular}
[c]{lll}%
$X_{1}=\left[
\begin{array}
[c]{cc}%
0 & x_{1}\\
x_{1} & 0
\end{array}
\right]  ,$ & $X_{n}=\left[
\begin{array}
[c]{cc}%
X_{n-1} & x_{n}I\\
x_{n}I & -X_{n-1}%
\end{array}
\right]  ,$ & for $n\geq2$.
\end{tabular}
\ \
\]
Note that $X_{n}^{T}=X_{n}$ and one can easily verify by induction that
\[
X_{n}^{2}=\left(  x_{1}^{2}+x_{2}^{2}+\cdots+x_{n}^{2}\right)  I_{2^{n}}.
\]
For a given $n\geq1$, we choose nonzero rational numbers $\alpha_{1}%
,\alpha_{2},...,\alpha_{n}$ such that $\alpha_{1}^{2}+\alpha_{2}^{2}%
+\cdots+\alpha_{n}^{2}=1$. Then if we set $x_{k}=\alpha_{k}$ ($k=1,2,...,n$)
in $X_{n}$, we obtain a symmetric rational orthogonal matrix $Y_{n}$ with
$\underline{Y_{n}}=M(Q_{n})$.
\end{proof}

\subsection{Hessenberg rational orthogonal matrices}

Let $H_{n}$ be the lower triangular $n\times n$ Hessenberg zero-pattern:%
\[
H_{n}=\left[
\begin{array}
[c]{cccccc}%
1 & 1 & 1 & 1 & \cdots & 1\\
1 & 1 & 1 & 1 &  & 1\\
0 & 1 & 1 & 1 &  & \vdots\\
& 0 & 1 & 1 &  & \\
&  & \ddots & \ddots &  & \\
\mathbf{0} &  &  & 0 &  & 1
\end{array}
\right]  .
\]
We consider here the corresponding symmetric zero-pattern $S_{n}H_{n}$, where
\[
S_{n}=\left[
\begin{array}
[c]{cccc}%
\mathbf{0} &  &  & 1\\
&  & 1 & \\
& \cdots &  & \\
1 &  &  & \mathbf{0}%
\end{array}
\right]
\]
is the antidiagonal permutation matrix.

\begin{proposition}
There exist infinitely many $Y_{n}=Y_{n}^{T}\in O_{n}(\mathbb{Q})$ with
$\underline{Y_{n}}=S_{n}H_{n}$.
\end{proposition}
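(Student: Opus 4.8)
The plan is to first make the target pattern explicit and then realize it as the row-reversal of a suitable orthogonal upper Hessenberg matrix. Writing out $S_nH_n$, one checks that row $i$ of $S_nH_n$ is row $n+1-i$ of $H_n$, so $(S_nH_n)_{i,j}\neq 0$ exactly when $j\geq (n+1-i)-1$, that is when $i+j\geq n$. Thus $S_nH_n$ is the symmetric ``staircase'' pattern whose zeros occupy precisely the upper-left triangle $\{(i,j):i+j\leq n-1\}$; in particular it is genuinely a symmetric pattern, so the statement is well posed. Since $S_n$ is the reversal permutation with $S_n^{2}=I$, producing a symmetric $Y_n=S_nC$ is the same as producing an orthogonal $C$ with $\underline{C}=H_n$ that is \emph{persymmetric}, i.e.\ $S_nC^{T}S_n=C$: indeed $Y_n^{T}=C^{T}S_n$ equals $S_nC=Y_n$ precisely when $C$ is persymmetric.

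The construction I would use is $C=G_1(\theta_1)G_2(\theta_2)\cdots G_{n-1}(\theta_{n-1})$, where $G_k(\theta)$ is the plane rotation acting on coordinates $k,k+1$ by $\bigl(\begin{smallmatrix}c_k&-s_k\\ s_k&c_k\end{smallmatrix}\bigr)$ with $c_k=\cos\theta_k$, $s_k=\sin\theta_k$. Such a product is orthogonal and upper Hessenberg. A direct computation gives $S_nG_k(\theta)S_n=G_{n-k}(-\theta)$, whence $S_nCS_n=G_{n-1}(-\theta_1)\cdots G_1(-\theta_{n-1})$, while $C^{T}=G_{n-1}(-\theta_{n-1})\cdots G_1(-\theta_1)$. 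Imposing the palindromic relations $\theta_k=\theta_{n-k}$ for $1\leq k\leq n-1$ makes these two products agree factor by factor, so $C$ is persymmetric and $Y_n:=S_nC$ is symmetric orthogonal. Choosing each independent $(c_k,s_k)$ to be a rational point on the unit circle (there are infinitely many, cf.\ Proposition \ref{dense}) with $c_ks_k\neq 0$ yields a rational $Y_n$; since at least one palindromic parameter is free for every $n\geq 2$ and, for instance, the subdiagonal entries $C_{k+1,k}=s_k$ occur unchanged in $Y_n$, varying that parameter produces infinitely many distinct matrices.

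It remains to check that the zero-pattern is \emph{exactly} $S_nH_n$, i.e.\ that no required-nonzero entry accidentally vanishes. I would prove by induction on $n$, writing $C=(G_1\cdots G_{n-2})G_{n-1}$, that appending $G_{n-1}$ only recombines the last two columns, so every entry of $C$ in the Hessenberg region $\{i\leq j+1\}$ is a \emph{signed monomial} in the $c_k,s_k$. Hence each such entry is nonzero as soon as all $c_ks_k\neq 0$, while the entries with $i>j+1$ vanish identically; therefore $\underline{C}=H_n$ and $\underline{Y_n}=S_nH_n$ on the nose. The main obstacle is exactly this last point: a priori an entry of $C$ could be a sum of several monomials and cancel, and the persymmetry requirement might over-determine the angles. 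The monomial structure of products of consecutive Givens rotations rules out cancellation, and the palindromic relations are visibly consistent, merely pairing $\theta_k$ with $\theta_{n-k}$ and leaving a nonempty set of free rational parameters; both difficulties dissolve once the inductive description of the entries of $C$ is in hand.
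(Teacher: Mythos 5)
Your construction is correct and is at bottom the same one the paper uses: the paper's recursion $X_{n}=(X_{n-1}\oplus 1)(I_{n-2}\oplus X_{2})$ unrolls to exactly your product $G_{1}G_{2}\cdots G_{n-1}$ of consecutive plane rotations, followed by left multiplication by the reversal $S_{n}$. The differences are in the verification and in generality. The paper takes a single rational point $(a,b)$ on the unit circle (so all angles equal, which trivially satisfies your palindromic condition), and establishes both symmetry and the zero-pattern by exhibiting, via induction, an explicit closed form for $Y_{n}$ whose entries are visibly nonzero monomials such as $a^{2}(-b)^{k}$ and $a(-b)^{k}$; the infinitude then comes from varying $(a,b)$. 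You instead allow distinct angles $\theta_{1},\dots,\theta_{n-1}$ subject to $\theta_{k}=\theta_{n-k}$, and you get symmetry conceptually from the conjugation identity $S_{n}G_{k}(\theta)S_{n}=G_{n-k}(-\theta)$ together with persymmetry, and the pattern from the general fact that the Hessenberg entries of $G_{1}\cdots G_{n-1}$ are signed monomials $c_{i-1}\bigl(\prod_{k=i}^{j-1}(-s_{k})\bigr)c_{j}$ (with $c_{0}=c_{n}=1$) in the $c_{k},s_{k}$, so no cancellation can occur. Your route buys a larger family of examples (roughly $\lceil (n-1)/2\rceil$ free rational parameters rather than one) and a cleaner, formula-free proof of symmetry; the paper's buys completely explicit matrices. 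Both correctly reduce the zero-pattern claim to the non-vanishing of monomials, and both are complete modulo the routine induction that each defers.
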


\begin{proof}
Let $a,b\in\mathbb{Q}$ be nonzero and such that $a^{2}+b^{2}=1$. We define
recursively the matrices $X_{n}\in O_{n}(\mathbb{Q})$, $n\geq2$, by%
\[
X_{2}=\left[
\begin{array}
[c]{cc}%
a & -b\\
b & a
\end{array}
\right]  ;
\]%
\[
X_{n}=\left[
\begin{array}
[c]{cc}%
X_{n-1} & \mathbf{0}\\
\mathbf{0} & 1
\end{array}
\right]  \left[
\begin{array}
[c]{cc}%
I_{n-2} & \mathbf{0}\\
\mathbf{0} & X_{2}%
\end{array}
\right]  ,
\]
for $n\geq3$. The matrices $Y_{n}=S_{n}H_{n}$ are symmetric and satisfy
$\underline{Y_{n}}=S_{n}H_{n}$. Indeed, we have
\[%
\begin{tabular}
[c]{lll}%
$Y_{2}=\left[
\begin{array}
[c]{rr}%
b & a\\
a & -b
\end{array}
\right]  ,$ &  & $Y_{3}=\left[
\begin{array}
[c]{rrr}%
\mathbf{0} & b & a\\
b & a^{2} & -ab\\
a & -ab & b^{2}%
\end{array}
\right]  $%
\end{tabular}
\ \
\]%
\[%
\begin{tabular}
[c]{lll}%
$Y_{4}=\left[
\begin{array}
[c]{rrrr}%
\mathbf{0} & \mathbf{0} & b & a\\
\mathbf{0} & b & a^{2} & -ab\\
b & a^{2} & -a^{2}b & ab^{2}\\
a & -ab & ab^{2} & -b^{3}%
\end{array}
\right]  ,$ & and & $Y_{5}=\left[
\begin{array}
[c]{rrrrr}%
\mathbf{0} & \mathbf{0} & \mathbf{0} & b & a\\
\mathbf{0} & \mathbf{0} & b & a^{2} & -ab\\
\mathbf{0} & b & a^{2} & -a^{2}b & ab^{2}\\
b & a^{2} & -a^{2}b & a^{2}b^{2} & -ab^{3}\\
a & -ab & ab^{2} & -ab^{3} & b^{4}%
\end{array}
\right]  .$%
\end{tabular}
\ \
\]
By induction on $n$ one can prove that%
%TCIMACRO{\TeXButton{\begingroup}{\begingroup}}%
%BeginExpansion
\begingroup
%EndExpansion%
%TCIMACRO{\TeXButton{\scalefont{x}}{\scalefont{0.8}}}%
%BeginExpansion
\scalefont{0.8}%
%EndExpansion%
\[
Y_{n}=\left[
\begin{array}
[c]{rrrrrrrrrr}
&  &  &  &  & \mathbf{0} & \mathbf{0} & \mathbf{0} & b & a\\
&  &  &  &  & \mathbf{0} & \mathbf{0} & b & a^{2} & -ab\\
& \mathbf{0} &  &  &  & \mathbf{0} & b & a^{2} & -a^{2}b & ab^{2}\\
&  &  &  &  & b & a^{2} & -a^{2}b & a^{2}b^{2} & -ab^{3}\\
&  &  &  & \cdots &  &  &  &  & \\
\mathbf{0} & \mathbf{0} & \mathbf{0} & b &  & a^{2}\left(  -b\right)  ^{n-9} &
a^{2}\left(  -b\right)  ^{n-8} & a^{2}\left(  -b\right)  ^{n-7} & a^{2}\left(
-b\right)  ^{n-6} & a\left(  -b\right)  ^{n-5}\\
\mathbf{0} & \mathbf{0} & b & a^{2} &  & a^{2}\left(  -b\right)  ^{n-8} &
a^{2}\left(  -b\right)  ^{n-7} & a^{2}\left(  -b\right)  ^{n-6} & a^{2}\left(
-b\right)  ^{n-5} & a\left(  -b\right)  ^{n-4}\\
\mathbf{0} & b & a^{2} & -a^{2}b &  & a^{2}\left(  -b\right)  ^{n-7} &
a^{2}\left(  -b\right)  ^{n-6} & a^{2}\left(  -b\right)  ^{n-5} & a^{2}\left(
-b\right)  ^{n-4} & a\left(  -b\right)  ^{n-3}\\
b & a^{2} & -a^{2}b & a^{2}b^{2} &  & a^{2}\left(  -b\right)  ^{n-6} &
a^{2}\left(  -b\right)  ^{n-5} & a^{2}\left(  -b\right)  ^{n-4} & a^{2}\left(
-b\right)  ^{n-3} & a\left(  -b\right)  ^{n-2}\\
a & -ab & ab^{2} & -ab^{3} &  & a\left(  -b\right)  ^{n-5} & a\left(
-b\right)  ^{n-4} & a\left(  -b\right)  ^{n-3} & a\left(  -b\right)  ^{n-2} &
\left(  -b\right)  ^{n-1}%
\end{array}
\right]  .
\]%
%TCIMACRO{\TeXButton{\endgroup}{\endgroup}}%
%BeginExpansion
\endgroup
%EndExpansion

\end{proof}

\bigskip

Note that the trace of $Y_{n}$ is zero for $n$ even and one for $n$ odd.

\section{Open problems\label{sec5}}

In addition to the conjectures formulated in the paper, we state here some
further open problems.

The first problem is of purely combinatorial nature.

\begin{problem}
Let $X$ be an $n\times n$ zero-pattern and assume that $X^{T}$ is equivalent
to $X$. Is it true that $X$ is equivalent to a symmetric pattern?
\end{problem}

We have verified that the answer to the above problem is affirmative for
$n\leq5$.

\begin{problem}
Are there symmetric rational orthogonal matrices with the following
zero-patterns:
\[%
\begin{tabular}
[c]{lll}%
$\left[
\begin{array}
[c]{rrrrr}%
1 & \mathbf{0} & 1 & 1 & 1\\
\mathbf{0} & 1 & 1 & 1 & 1\\
1 & 1 & \mathbf{0} & 1 & 1\\
1 & 1 & 1 & \mathbf{0} & 1\\
1 & 1 & 1 & 1 & \mathbf{0}%
\end{array}
\right]  $ & , & $\left[
\begin{array}
[c]{rrrrr}%
1 & 1 & 1 & 1 & \mathbf{0}\\
1 & 1 & 1 & \mathbf{0} & 1\\
1 & 1 & \mathbf{0} & 1 & 1\\
1 & \mathbf{0} & 1 & \mathbf{0} & 1\\
\mathbf{0} & 1 & 1 & 1 & 1
\end{array}
\right]  $%
\end{tabular}
\ ?
\]

\end{problem}

As mentioned in Section \ref{sec3}, in spite of much effort we were not able
to construct such matrices. Below we give some examples of matrices with the
same zero-pattern \emph{as close as possible }to be rational. For the first
zero-patten we give two examples. The first one minimizes the denominator:%
%TCIMACRO{\TeXButton{\begingroup}{\begingroup}}%
%BeginExpansion
\begingroup
%EndExpansion%
%TCIMACRO{\TeXButton{\scalefont{x}}{\scalefont{1}}}%
%BeginExpansion
\scalefont{1}%
%EndExpansion%
\[
\frac{1}{4}\left[
\begin{array}
[c]{rrrrr}%
2 & \mathbf{0} & 2 & \sqrt{3}-1 & -\sqrt{3}-1\\
\mathbf{0} & 2 & 2 & -\sqrt{3}-1 & \sqrt{3}-1\\
2 & 2 & \mathbf{0} & 2 & 2\\
\sqrt{3}-1 & -\sqrt{3}-1 & 2 & \mathbf{0} & 2\\
-\sqrt{3}-1 & \sqrt{3}-1 & 2 & 2 & \mathbf{0}%
\end{array}
\right]  _{5,2}^{1},
\]%
\[
\frac{1}{21}\left[
\begin{array}
[c]{rrrrr}%
16 & \mathbf{0} & 10 & 4\sqrt{5} & \sqrt{5}\\
\mathbf{0} & 5 & 4 & -4\sqrt{5} & 8\sqrt{5}\\
10 & 4 & \mathbf{0} & -7\sqrt{5} & -4\sqrt{5}\\
4\sqrt{5} & -4\sqrt{5} & -7\sqrt{5} & \mathbf{0} & 6\\
\sqrt{5} & 8\sqrt{5} & -4\sqrt{5} & 6 & \mathbf{0}%
\end{array}
\right]  _{5,2}^{1}.
\]
We give four examples for the next zero-pattern. The first example minimizes
the square root, the second has the smallest denominator, the third minimizes
the number of square roots, and the last one contains the smallest prime
number under square root:%
\[
\frac{1}{245}\left[
\begin{array}
[c]{rrrrr}%
225 & 40 & 18\sqrt{15} & -14\sqrt{15} & \mathbf{0}\\
40 & 45 & -40\sqrt{15} & \mathbf{0} & 180\\
18\sqrt{15} & -40\sqrt{15} & \mathbf{0} & 175 & 6\sqrt{15}\\
-14\sqrt{15} & \mathbf{0} & 175 & \mathbf{0} & 42\sqrt{15}\\
\mathbf{0} & 180 & 6\sqrt{15} & 42\sqrt{15} & -25
\end{array}
\right]  _{12}^{1},
\]%
\[
\frac{1}{108}\left[
\begin{array}
[c]{rrrrr}%
98 & 28 & 7\sqrt{22} & -3\sqrt{22} & \mathbf{0}\\
28 & 12 & -20\sqrt{22} & \mathbf{0} & 44\\
7\sqrt{22} & -20\sqrt{22} & \mathbf{0} & 42 & \sqrt{22}\\
-3\sqrt{22} & \mathbf{0} & 42 & \mathbf{0} & 21\sqrt{22}\\
\mathbf{0} & 44 & \sqrt{22} & 21\sqrt{22} & -2
\end{array}
\right]  _{12}^{1},
\]%
\[
\frac{1}{2527}\left[
\begin{array}
[c]{rrrrr}%
2457 & 12\sqrt{195} & -420 & 380 & \mathbf{0}\\
12\sqrt{195} & 343 & 80\sqrt{195} & \mathbf{0} & -160\sqrt{195}\\
-420 & 80\sqrt{195} & \mathbf{0} & 2223 & 140\\
380 & \mathbf{0} & 2223 & \mathbf{0} & 1140\\
\mathbf{0} & -160\sqrt{195} & 140 & 1140 & -273
\end{array}
\right]  _{12}^{1},
\]%
\[
\frac{1}{418241}\left[
\begin{array}
[c]{rrrrr}%
403200 & 5460\sqrt{17} & -79040 & 74841 & \mathbf{0}\\
5460\sqrt{17} & 110864 & 35511\sqrt{17} & \mathbf{0} & -90972\sqrt{17}\\
-79040 & 35511\sqrt{17} & \mathbf{0} & 381780 & 38532\\
74841 & \mathbf{0} & 381780 & \mathbf{0} & 153520\\
\mathbf{0} & -90972\sqrt{17} & 153520 & 38532 & -95823
\end{array}
\right]  _{12}^{1}.
\]%
%TCIMACRO{\TeXButton{\endgroup}{\endgroup}}%
%BeginExpansion
\endgroup
%EndExpansion

In connection with Proposition \ref{conf}, we raise the following special case
of Conjecture \ref{con3} as a separate interesting problem.

\begin{problem}
For even $n$, show that there exists $X=X^{T}\in O_{n}(\mathbb{Q})$ with
$\underline{X}=J_{n}-I_{n}$. For odd $n\geq3$, show that there exists
$X=X^{T}\in O_{n}(\mathbb{Q})$ with $\underline{X}=\Delta_{n,n-2}$ and
Tr$\left(  X\right)  =1$.
\end{problem}

Many of the matrices in Section \ref{sec2} and Section \ref{sec3} have been
constructed with the help of a computer. It is natural to raise the following problem:

\begin{problem}
Determine the computational complexity of the following decision problem:

\begin{itemize}
\item \noindent\textbf{Given:} A square $(0,1)$-matrix $M$ of size $n$.

\item \noindent\textbf{Task:} Determine if $M$ is the zero-pattern of a real
orthogonal matrix.
\end{itemize}

The size of $M$ gives the length of the input.
\end{problem}


\begin{thebibliography}{99}                                                                                               %


\bibitem {bb}L. B. Beasely, R. A. Brualdi and B. L. Shader,
\emph{Combinatorial orthogonality}, in Combinatorial and Graph-Theoretical
Problems in Linear Algebra, R. A. Brualdi, S. Friedland and V. Klee, eds.,
Springer-Verlag, New York, pp. 207-218, 1993.

\bibitem {ch}G.-S. Cheon, C. R. Johnson, S.-G. Lee and E. J. Pribble, The
possible numbers of zeros in an orthogonal matrix, \emph{Electron. J. Linear
Algebra} \textbf{5} (1999), 19-23.

\bibitem {DZ}D. \v{Z}. Djokovi\'{c}, K. Zhao, Near-diagonal representatives of
conjugacy classes of orthogonal matrices, \emph{J. Algebra Appl., }%
\textbf{7}:5 (2008), 671--683.

\bibitem {F63}M. Fiedler, Symposium on the Theory of Graphs and Its
Applications, Smolenice, Czechoslovakia, 1963.

\bibitem {F88}M. Fiedler, Doubly stochastic matrices and optimization,
\emph{Advances in mathematical optimization, }44-51\emph{, Math. Res.,
}45\emph{, }Akademie-Verlag, Berlin, 1988.

\bibitem {gse}A. V. Geramita and J. Seberry, \emph{Orthogonal Designs:
Quadratic Forms and Hadamard matrices}, Marcel Dekker, New York, 1979.

\bibitem {gs}J. M. Goethals and J. J. Seidel, Orthogonal matrices with zero
diagonal, \emph{Canad. J. Math.}, vol. 19, pp. 1001-1010 (1967).

\bibitem {hj}R. A. Horn and C. R. Johnson, \emph{Matrix Analysis}, Cambridge
University Press, Cambridge, UK, 1999.

\bibitem {j}Y. Jiang, L. H. Mitchell, and S. K. Narayan, Unitary matrix
digraphs and minimum semidefinite rank, \emph{Linear Algebra Appl.}
\textbf{428}:7 (2008), 1685-1695.

\bibitem {lan}A. Land\'{e}, New Foundations of Quantum Mechanics,
\emph{American Journal of Physics}, Volume \textbf{34}, Issue 12, pp.
1203-1204 (1966).

\bibitem {lou}J. D. Louck, Doubly stochastic matrices in Quantum Mechanics,
\emph{Found. Phys.} \textbf{27} (1997), \emph{no. 8}, 1085--1104.

\bibitem {lu}R. Lundgren, K. B. Reid, S. Severini, and D. Stewart,
Quadrangularity and strong quadrangularity in tournaments, \emph{Australas. J.
Combin.} \textbf{34} (2006), 247-260. arXiv:math/0409474v1 [math.CO]

\bibitem {nc00}M. A. Nielsen and I. L. Chuang, \emph{Quantum computation and
quantum information}, Cambridge University Press, Cambridge, 2000.

\bibitem {sev1}S. Severini, On the digraph of a unitary matrix, \emph{SIAM J.
Matrix Anal. Appl.}, \textbf{25}, 1 (2003), 295-300. arxiv:math.CO/0205187.

\bibitem {sz}S. Severini, F. Sz\"{o}ll\H{o}si, A further look into
combinatorial orthogonality, \emph{Electron. J. Linear Algebra }\textbf{17}
(2008), 376-388. arXiv:0709.3651v1 [math.CO]
\end{thebibliography}
\end{document}